\newtheorem{theorem}{Theorem}[section]
\newtheorem{corollary}[theorem]{Corollary}
\newtheorem*{main}{Main Theorem}
\newtheorem{lemma}[theorem]{Lemma}
\newtheorem{proposition}[theorem]{Proposition}
\theoremstyle{definition}
\newtheorem{defn}[theorem]{Definition}
\newtheorem{remark}[theorem]{Remark}
\theoremstyle{remark}
\newtheorem{step}{Step}
\newcommand{\bpr}{\begin{proof}\hspace{3pt}}
\newcommand{\epr}{\end{proof}}
\newcommand{\lb}{\left(}
\newcommand{\rb}{\right)}
\newcommand{\curl}{\nabla \times}
\newcommand{\N}{\mathbb{N}}
\newcommand{\R}{\mathbb{R}}
\newcommand{\C}{\mathbb{C}}
\newcommand{\eps}{\epsilon}
\newcommand{\pa}{\partial}
\newcommand{\p}{\partial}
\begin{document}

\title{Approximate Isotropic Cloak for the Maxwell equations}
\author{Tuhin Ghosh, Ashwin Tarikere}

\address{Institute for Advanced Study, The Hong Kong University of Science and Technology  }
\email{iasghosh@ust.hk}
\address{Department of Mathematics, University of Washington}
\email{ashwin91@math.washington.edu}

\subjclass[2010]{35B27; 35Q60; 78A25; 35R30}

\date{\today}

\keywords{Homogenization, Cloaking, Maxwell's equations, Inverse problems}

\maketitle

\begin{abstract}
We construct a regular isotropic approximate cloak for the Maxwell system of equations. The method of transformation optics has enabled the design of electromagnetic parameters that cloak a region from external observation. However, these constructions are singular and anisotropic, making practical implementation difficult. Thus, regular approximations to these cloaks have been constructed that cloak a given region to any desired degree of accuracy. In this paper, we show how to construct isotropic approximations to these regularized cloaks using homogenization techniques, so that one obtains cloaking of arbitrary accuracy with \emph{regular and isotropic} parameters.
\end{abstract}

\section{Introduction}

A region of space is said to be cloaked if its contents are indistinguishable from empty space by external measurements. The idea of invisibility cloaks has fascinated human beings throughout history, and in recent years there has been a wave of serious theoretical proposals for such a cloak that have gained considerable interest in mathematics and the broader scientific community \cite{GKLU3}. The particular approach to cloaking that has received the most attention is that of \emph{transformation optics}, the use of transformation rules for the material properties of optics under changes of coordinates: the index of refraction $n(x)$ for scalar optics, governed by the Helmholtz equation, and the electric permittivity $\epsilon(x)$ and magnetic permeability $\mu(x)$ for vector optics, as described by Maxwell's equations.The transformation optics approach to cloaking for the conductivity equation was first introduced by Greenleaf, Lassas and Uhlmann \cite{GKLU0,GKLU1} in 2003, in the context of providing counter-examples to the Calder\'{o}n Problem in dimensions $N \geq 3$. Three years later, using the same singular change of coordinates, Pendry, Schurig and Smith \cite{PSS} gave a prescription for values of $\epsilon(x)$ and $\mu(x)$ giving a cloaking device for electromagnetic waves. Also in 2006, Leonhardt \cite{Leon} gave a description, based on conformal mapping, of inhomogeneous indices of refraction $n(x)$ in two dimensions that would make light rays  go around a region and emerge on the other side as if it had passed through empty space.\\

 The method of transformation optics is based on the invariance properties of governing equations under changes of coordinates. One first selects a region $\Omega$ in space. For a fixed point $p \in \Omega$ and a bounded open subset $D \Subset \Omega$ containing $p$, let $F$ be a diffeomorphism from $\Omega \setminus \{p\}$ to $\Omega \setminus \overline{D}$. The material parameters in $\Omega \setminus \overline{D}$ are defined by \emph{pushing-forward} the material parameters of homogeneous space by the singular transformation $F$. Now using the transformation properties of the governing equation and a removable singularity theorem, it is shown that no matter what the material parameters are in $D$, the resulting acoustic or electromagnetic waves have the same scattering properties and boundary measurements as in the case where $\Omega$ is a homogeneous medium. We call $\Omega \setminus\overline{D}$ and $D$ the cloaking region and the cloaked region respectively. Similarly, cloaking devices based on blowing up a curve or a flat surface were considered in \cite{Wormhole} and \cite{Carpet} respectively, resulting in the so-called wormholes and carpet cloaks. Since these constructions are based on a singular coordinate transformation, the material parameters of the cloak thus obtained are also singular. This presents serious challenges to practical implementation using metamaterials, as well as making theoretical analysis more difficult. To tackle the acoustic and electromagnetic wave equations with singular coefficients arising from singular cloaking constructions, finite energy solutions in Sobolev spaces with singular weights were introduced and studied in \cite{FullWave, Wormhole,Hetmaniuk,Ting2}. \\
 
A natural way to avoid the singular structures of the perfect cloak is to construct regular approximations which provides cloaking of arbitrary accuracy. Kohn, Shen, Vogelius and Weinstein in \cite{KSVW} developed a regularization method based on approximating the `blow-up-a-point' transformation by a `blow-up-a-small-region' transformation, apart from extending the result of \cite{GKLU0,GKLU1} to $N=2$ dimensions. A slightly different approximation scheme based on truncation of singularities was considered by Greenleaf, Kurylev, Lassas and Uhlmann in \cite{GKLU20}. Nevertheless, it was pointed out that these constructions are equivalent in \cite{KLS}. Due to its practical importance, such approximate cloaking schemes have been studies in various settings, such as acoustic waves \cite{KOVW, Ammari1, Ammari2, FSH, FSH2, LiuHelmholtz, LiuScatter,LiuAcoustic,Nguyen1,Nguyen2},  elastic waves \cite{Hongyu, Lin} and electromagnetic waves \cite{NearCloak,NearCloak2,Zhou,Ammari3}. Also see \cite{LHUG} for a broad survey of approximate cloaking. In \cite{DengLiuUhlmann}, the authors generalize this approximate cloaking construction to more general generating sets, such as curves and planar surfaces.\\

Though the regularized approximate cloak is nonsingular, its material parameters are still anisotropic, which poses another difficulty in practical implementation. Therefore, it would be useful if, as a second step these regularized approximate cloaks were further approximated by regular \emph{isotropic} cloaks.  It is a well-known phenomenon in homogenization theory \cite{A,T,BLP} that homogenization of isotropic material parameters can lead, in the small scale limit, to anisotropic parameters.This approach, called \emph{isotropic transformation optics} was introduced and performed in the case of electrostatic, acoustic and quantum cloaking by Greenleaf, Kurylev, Lassas and Uhlmann in \cite{GKLU2} and \cite{IsoTechnical}. It was shown in \cite{IsoTechnical} that the for a fixed Dirichlet boundary value, the Neumann data for the approximate cloaking construction converge strongly. Moreover, Faraco, Kurylev and Ruiz \cite{Faraco} showed that the same construction in fact gives us strong convergence of the Dirichlet-to-Neumann maps in the operator norm.  More recently, it has been implemented in a quasilinear model \cite{TK}.\\

In this paper, we will construct an approximate isotropic cloak for Maxwell's equations using similar homogenization techniques.  We begin by summarizing the main results of the existence-uniqueness and cloaking theory for the Maxwell equations in Section 2. In Section 3, we present a brief overview of homogenization and present a proof of the homogenization result for the Maxwell equations, under the assumption that both $\epsilon(x)$ and $\mu(x)$ have positive imaginary parts. Section 4 will be devoted to an explicit construction of an approximate isotropic cloak using inverse homogenization techniques as in \cite{GKLU2}. Since the magnetic permeability of the anisotropic approximate cloak does not have an imaginary part, we need to go through a two step process. In the first step, we alter permittivities and permeabilities by a small positive parameter $\delta$ so that the assumptions of Section 3 are satisfied, and then construct isotropic parameters $\eps^n_\delta$ and $\mu^n$ that homogenize to the altered paremetrs. In the second step, we let $\delta \to 0$. Finally in Section 5, we will prove our Main Theorem: that as we first let $n \to \infty$ and then $\delta \to 0$, the electromagnetic boundary measurements corresponding to our approximate cloaking construction converge strongly to the boundary measurements in empty space. The order in which these limits are taken can not be interchanged.

\tableofcontents

\section{Introduction to the Maxwell equations}

Let us begin by introducing the Maxwell system of equations and some basic results regarding existence, uniqueness and stability of solutions. Let $\Omega \subset \R^3$ be a smooth bounded domain with (possibly anisotropic) electric permittivity, magnetic permeability and conductivity described by real symmetric matrix-valued measurable functions $\widetilde{\eps},\mu$ and $\sigma$ respectively. If $J \in L^2(\Omega)^3$ is the intrinsic current density in $\Omega$, the time harmonic Maxwell equations  in $\Omega$ at frequency $\omega >0$ are given by
\begin{equation}\label{maxwell}
\begin{cases} 
\curl E = i\omega\mu H,\\
\curl H = -i\omega\lb \widetilde{\eps} +\frac{i\sigma}{\omega}\rb E + J
\end{cases}
\end{equation}
where $E$ and $H$ are the electric and magnetic fields respectively. For convenience, we define $\eps = \widetilde{\eps} +i\sigma/\omega$. We say that a medium is \emph{regular} if $\widetilde{\eps}$ and $\mu$ are uniformly positive definite and $\sigma$ is non-negative definite over $\Omega$, that is, there exists $C > 0$ such that
\begin{eqnarray}
C|\xi|^2 \leq \sum_{i,j=1}^3 \widetilde{\eps}_{ij}(x)\xi_i\xi_j \leq C^{-1}|\xi|^2, \label{regbig}\\
C|\xi|^2 \leq \sum_{i,j=1}^3 \mu_{ij}(x)\xi_i\xi_j \leq C^{-1}|\xi|^2,
\end{eqnarray}
and 
\begin{equation} \label{regend}
0 \leq \sum_{i,j=1}^3 \sigma_{ij}(x)\xi_i\xi_j \leq C^{-1}|\xi|^2 
\end{equation}
for all $\xi \in \R^3$ and a.e. $x \in \Omega$. Under these regularity assumptions on $\eps$ and $\mu$, it can be shown that the boundary value problem for the Maxwell equations is uniquely solvable in a certain Sobolev-type space, except for a discrete set of values of $\omega$. In this section, we introduce the Maxwell system of equations and related function spaces. We will then note (without proof) some of the important results about these spaces that will be used throughout the paper. The material presented here is classical. The reader can refer to \cite{Kirsch, Nedelec, Monk, BUF1, BUF2, GR} for details.

\subsection{Function spaces for the Maxwell equations} 
\begin{defn} Let $\Omega \subset \R^3$ be open. We define the vector space
\[
H(curl,\Omega) = \{u = (u^1,u^2,u^3)\in (L^2(\Omega))^3: \, \curl u \in L^2(\Omega)^3\} \]
and equip it with the inner product
\[
\langle u,w \rangle_{H(curl,\Omega)} = \int_\Omega u\cdot \overline{w} +(\curl u)\cdot\overline{(\curl w)}. \]
Similarly, we define 
\[
H(div,\Omega) = \{u = (u^1,u^2,u^3)\in (L^2(\Omega))^3: \, \nabla \cdot u \in L^2(\Omega)\} \]
with the inner product
\[
\langle u,w \rangle_{H(div,\Omega)} = \int_\Omega u\cdot \overline{w} +(\nabla \cdot u)\overline{(\nabla \cdot w)}. \]
As in the case of the usual Sobolev spaces, $H_0(curl,\Omega)$ and $H_0(div,\Omega)$ are defined to be the closures of $C_c^\infty(\Omega)$ in $H(curl,\Omega)$ and $H(div,\Omega)$ respectively.
\end{defn}
It is easy to show that $H(curl,\Omega)$ and $H(div,\Omega)$ are both Hilbert spaces. Moreover, when $\Omega$ is Lipschitz,  $C^\infty(\overline{\Omega})^3$ is dense in both of these spaces, and there is a canonical way of taking traces of these functions on the boundary $\pa \Omega$.

\begin{defn}
Let $\Omega\subset\mathbb{R}^3$ be an open set with Lipschitz boundary. Let $\nu$ be the outward unit normal vector field on $\pa \Omega$. We define for  $u\in C^\infty(\overline{\Omega})^3$,
\[\begin{cases}
\gamma_n\, u := \nu\cdot u \quad\mbox{on }\partial\Omega\\
\gamma_T\, u := u-(\nu\cdot u)\nu=-\nu\times(\nu\times u)\quad\mbox{on }\partial\Omega\\ 
\gamma_\tau\, u := \nu\times u \quad\mbox{on }\partial\Omega.
\end{cases}
\]

On the boundary of $\Omega$, we define the following Sobolev spaces of tangential fields: For all  $s\in\mathbb{R}$,
\begin{eqnarray*}
TH^{s}(\partial\Omega) &:=& \{u\in (H^{s}(\pa \Omega))^3:\ \gamma_n\, u = 0\mbox{ on }\pa\Omega\}\\
H^{s}(div,\pa \Omega) &:=& \{ u \in TH^{s}(\partial\Omega):\, Div_{\pa \Omega}u \in H^{s}(\pa \Omega) \} \\
H^{s}(curl, \pa \Omega) &:=& \{ u \in TH^{s}(\partial\Omega):\, Curl_{\pa \Omega}u \in H^{s}(\pa \Omega) \}.
\end{eqnarray*}
Here $Div_{\pa \Omega}$ and $Curl_{\pa \Omega}$ represent the surface divergence and surface curl respectively, defined by
\begin{equation*}
\begin{cases}
\langle Div_{\pa \Omega}u,\varphi\rangle &= -\langle u,\nabla_T \varphi\rangle\\
\langle Curl_{\pa \Omega}u,\varphi\rangle &= -\langle u,\nu\times \nabla_T \varphi\rangle 
\end{cases}
\qquad \forall \varphi \in C^{\infty}(\pa \Omega)
\end{equation*}
where $\nabla_T$-denotes the tangential component of the gradient and $\langle,\rangle$ denotes the standard $L^2(\pa\Omega)$ inner product with respect to the surface measure on the boundary $\pa\Omega$. \\ 
\end{defn}
\noindent
Note that, the surface curl $Curl_{\pa\Omega}u$  is a scalar function and enjoys the relation 
\[Curl_{\pa \Omega}u= Div_{\pa \Omega}(u\times \nu),\quad  u\in H^{s}(curl, \pa \Omega).\]
In particular, when $u$ is smooth,
\[
Curl_{\pa \Omega}u = \nu \cdot (\curl u)|_{\pa \Omega}. \]

\begin{theorem}[Trace Theorem]\label{t3} There exist continuous  extensions of the trace operators $\gamma_{\tau}: u \mapsto \nu \times u|_{\pa \Omega}$ and $\gamma_T: u \mapsto -\nu\times(\nu\times u|_{\pa \Omega})$ to continuous linear operators from $H(curl,\Omega) \to H^{-1/2}(div,\pa \Omega)$ and $H(curl,\Omega) \to H^{-1/2}(curl,\pa \Omega)$ respectively. Moreover, the normal trace operator $\gamma_n : u \mapsto \nu\cdot u|_{\pa\Omega}$ extends  to a continuous linear operator from $H(div,\Omega) \to H^{-1/2}(\Omega)$. The kernels of these operators are precisely the closures of $C_c^\infty(\Omega)^3$ in the corresponding domain spaces, that is,
\[
\ker \gamma_{\tau} = \ker \gamma_T = H_0(curl,\Omega), \qquad \ker \gamma_n =H_0(div,\Omega). \]

Moreover, $\gamma_{\tau},\gamma_T$ and $\gamma_n$ are surjective and have bounded right inverses $\eta_{\tau}: H^{-1/2}(div,\pa \Omega) \to H(curl,\Omega)$, $\eta_T: H^{-1/2}(curl,\Omega) \to H(curl,\Omega)$ and $\eta_n: H^{-1/2}(\pa \Omega)\to H(div,\Omega)$, respectively.
\end{theorem}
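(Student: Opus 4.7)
The plan is to establish the three claims (continuity, kernel characterization, and surjectivity) by combining a Green's identity with elliptic lifting arguments. I treat $\gamma_\tau$ in detail; the cases of $\gamma_T$ and $\gamma_n$ are strictly parallel.

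For $u, v \in C^\infty(\overline{\Omega})^3$, the vector identity $\divg(u \times \overline{v}) = (\curl u)\cdot \overline{v} - u\cdot \overline{\curl v}$ combined with the divergence theorem yields
\begin{equation*}
\int_\Omega (\curl u)\cdot \overline{v} - u \cdot \overline{\curl v}\, dx = \int_{\pa \Omega} (\nu \times u)\cdot \overline{v}\, dS.
\end{equation*}
Since $\nu\times u$ is tangential, the right-hand side is the $L^2(\pa\Omega)$ pairing of $\gamma_\tau u$ with $\gamma_T v$. For $u \in H(curl, \Omega)$ and $f \in H^{1/2}(curl, \pa\Omega)$, I would pick any lift $v_f \in H^1(\Omega)^3$ with $\gamma_T v_f = f$ and \emph{define}
\begin{equation*}
\langle \gamma_\tau u, f \rangle := \int_\Omega (\curl u)\cdot \overline{v_f} - u\cdot \overline{\curl v_f}\, dx.
\end{equation*}
The right-hand side is bounded by $\|u\|_{H(curl,\Omega)}\|v_f\|_{H^1}$, which furnishes a continuous functional on $H^{1/2}(curl, \pa\Omega)$ of norm at most $\|u\|_{H(curl,\Omega)}$, placing $\gamma_\tau u$ in $H^{-1/2}(div, \pa\Omega)$, the dual space of $H^{1/2}(curl, \pa\Omega)$ under the pairing induced by $Div_{\pa\Omega}$ and $Curl_{\pa\Omega}$. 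Independence of the lift is checked on smooth fields, using that two lifts of $f$ differ by an element of $H^1_0(\Omega)^3 \subset H_0(curl, \Omega)$, which is approximable by $C_c^\infty(\Omega)^3$ fields for which the Green's identity forces the right-hand side to vanish.

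The inclusion $H_0(curl, \Omega) \subseteq \ker \gamma_\tau$ follows at once from density of $C_c^\infty(\Omega)^3$ and continuity. For the converse, given $u \in H(curl, \Omega)$ with $\gamma_\tau u = 0$, I would localize by a partition of unity subordinate to a boundary collar, flatten the boundary via bi-Lipschitz charts, extend $u$ by reflection to a neighborhood, and then mollify-and-cutoff to produce an approximating sequence in $C_c^\infty(\Omega)^3$; the vanishing tangential trace is precisely what makes the reflected field lie in $H(curl)$ across the flattened boundary. The bounded right inverses are then obtained as follows: given $f \in H^{-1/2}(div, \pa\Omega)$, use a surface Hodge decomposition $f = \nabla_T p + \nu \times \nabla_T q$ (with $p$ and $q$ recovered from scalar Poisson problems on $\pa\Omega$), solve the associated interior elliptic problems, and assemble the solutions into $\eta_\tau f \in H(curl, \Omega)$; linearity and boundedness follow from the open mapping theorem once the image of $\gamma_\tau$ is identified.

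The main obstacle is the accurate identification of the fractional surface spaces: that $H^{-1/2}(div,\pa\Omega)$ and $H^{-1/2}(curl,\pa\Omega)$ are genuine topological duals of one another, and that $\gamma_\tau$ maps \emph{onto} the former (not merely \emph{into} it). On a general Lipschitz domain these assertions require nontrivial analysis of tangential differential operators and Hodge-type decompositions on $\pa\Omega$, which is precisely the content of the references to Buffa--Ciarlet, Monk, and N\'ed\'elec cited in the excerpt; the paper accordingly quotes the theorem without proof.
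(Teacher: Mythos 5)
The paper itself offers no proof of this theorem: it is quoted as classical, with the work delegated to the cited references (N\'ed\'elec, Monk, Buffa et al., Girault--Raviart), so there is no in-paper argument to compare against and your sketch has to stand on its own. As it stands it has a genuine gap at the very first step, the mapping property $\gamma_\tau : H(curl,\Omega)\to H^{-1/2}(div,\partial\Omega)$. You define $\gamma_\tau u$ by pairing against $H^1(\Omega)^3$-lifts of boundary data and conclude that it is a bounded functional on $H^{1/2}(curl,\partial\Omega)$, ``the dual space of'' which you identify with $H^{-1/2}(div,\partial\Omega)$. That identification is not correct: by the paper's Lemma \ref{t2} (and Buffa--Ciarlet), $H^{-1/2}(div,\partial\Omega)$ is the dual of $H^{-1/2}(curl,\partial\Omega)$, not of the smaller, more regular space $H^{1/2}(curl,\partial\Omega)$, whose dual is strictly larger. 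What your Green's-identity argument actually delivers is only $\gamma_\tau u\in TH^{-1/2}(\partial\Omega)$. Membership in $H^{-1/2}(div,\partial\Omega)$ requires in addition that $Div_{\partial\Omega}(\nu\times u)\in H^{-1/2}(\partial\Omega)$, and this is the step that is missing. It is supplied by the identity $Div_{\partial\Omega}(\nu\times u)=-\,\nu\cdot(\nabla\times u)|_{\partial\Omega}$ (the relation $Curl_{\partial\Omega}u=Div_{\partial\Omega}(u\times\nu)=\nu\cdot(\nabla\times u)$ recorded just before the theorem), combined with the normal-trace part of the theorem applied to $\nabla\times u$, which lies in $H(div,\Omega)$ because $\nabla\cdot(\nabla\times u)=0$. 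Without that extra surface-divergence control the map lands in too large a space, and the same correction is needed for $\gamma_T$ (there one controls $Curl_{\partial\Omega}\gamma_T u$ instead). You cannot instead pair directly against all of $H^{-1/2}(curl,\partial\Omega)$, because lifting such rough data requires the right inverse $\eta_T$, i.e.\ the surjectivity you have not yet proved.

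The remaining parts are sketched at a level that conceals the real work but is structurally reasonable. For the kernel, zero extension (rather than reflection) is the cleaner device: if $\gamma_\tau u=0$, the extension of $u$ by zero belongs to $H(curl)$ of a larger domain precisely because no surface term appears in the distributional curl, and translation plus mollification then gives approximation by $C_c^\infty(\Omega)^3$; your reflection variant can be made to work but needs the same care. For surjectivity, the surface Hodge decomposition of $f\in H^{-1/2}(div,\partial\Omega)$ and the associated interior liftings constitute the genuinely hard content on Lipschitz boundaries (this is exactly what Buffa--Ciarlet prove); note also that the phrase ``once the image of $\gamma_\tau$ is identified'' is the surjectivity statement itself, so nothing is gained there, although once surjectivity and continuity are known, a bounded right inverse does follow since $H(curl,\Omega)$ is a Hilbert space and the kernel is complemented. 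In short: fix the duality and the missing $Div_{\partial\Omega}$ estimate, and either carry out or explicitly cite the Hodge-decomposition lifting; as written, the proposal proves a weaker mapping property than the theorem asserts.
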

\begin{lemma}[Duality]\label{t2}
The spaces $H^{-1/2}(div,\pa\Omega)$ and $H^{-1/2}(curl,\pa\Omega)$ are mutually adjoint
with respect to the scalar product in $TL^2(\pa\Omega)$.
Moreover, for any $u_1,u_2\in  H(curl,\Omega)$ , the following Stokes formula holds
\[\int_\Omega \left( u_1\cdot (\nabla\times u_2)- u_2\cdot (\nabla\times u_1)\right)\, dx = \langle \gamma_T\, u_2 , \gamma_\tau u_1\rangle\]
where $\langle,\rangle$ denotes the duality bracket between $H^{-1/2}(curl,\pa\Omega)$ and $H^{-1/2}(div,\pa\Omega)$ 
with respect to the $L^2(\pa\Omega)$ inner product.
\end{lemma}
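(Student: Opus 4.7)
The plan is to establish the Stokes formula first for smooth fields and then extend by density, while simultaneously interpreting the boundary integral as the $H^{-1/2}$-duality pairing. For $u_1,u_2\in C^\infty(\overline{\Omega})^3$, I would apply the classical divergence theorem to the vector field $u_1\times u_2$, using the pointwise identity $\nabla\cdot(u_1\times u_2)=u_2\cdot(\nabla\times u_1)-u_1\cdot(\nabla\times u_2)$ in the interior, together with the algebraic identity $\nu\cdot(u_1\times u_2)=(\nu\times u_1)\cdot u_2=\gamma_\tau u_1\cdot\gamma_T u_2$ on the boundary. This yields the Stokes identity with the $L^2(\pa\Omega)$ scalar product on the right-hand side (up to a sign that must be tracked carefully).

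Next I would promote the pairing to the distributional setting using Theorem \ref{t3}. The traces $\gamma_\tau$ and $\gamma_T$ extend continuously and surjectively from $H(\mathrm{curl},\Omega)$ onto $H^{-1/2}(\mathrm{div},\pa\Omega)$ and $H^{-1/2}(\mathrm{curl},\pa\Omega)$ respectively, with bounded right-inverses $\eta_\tau,\eta_T$. For $\phi\in H^{-1/2}(\mathrm{curl},\pa\Omega)$ and $\psi\in H^{-1/2}(\mathrm{div},\pa\Omega)$, I would \emph{define} $\langle\phi,\psi\rangle$ by the Stokes integral evaluated at $u_1=\eta_\tau\psi$ and $u_2=\eta_T\phi$. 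Cauchy--Schwarz bounds this by $\|u_1\|_{H(\mathrm{curl},\Omega)}\|u_2\|_{H(\mathrm{curl},\Omega)}$. Independence of the chosen extensions is the key routine check: if $\tilde u_1$ also satisfies $\gamma_\tau\tilde u_1=\psi$, then $u_1-\tilde u_1\in\ker\gamma_\tau=H_0(\mathrm{curl},\Omega)$ is approximable by $C_c^\infty(\Omega)^3$ fields, for which the smooth Stokes formula gives zero boundary contribution; the analogous argument handles $u_2$. Combined with the density of $C^\infty(\overline\Omega)^3$ in $H(\mathrm{curl},\Omega)$, this extends the Stokes identity to all $u_1,u_2\in H(\mathrm{curl},\Omega)$.

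The hard part is non-degeneracy: showing that the pairing identifies $H^{-1/2}(\mathrm{div},\pa\Omega)$ with the strong dual of $H^{-1/2}(\mathrm{curl},\pa\Omega)$ and vice versa, rather than a proper subspace of it. Here I would follow the classical construction of \cite{BUF1,BUF2}: reduce to a flat boundary via a smooth partition of unity, where $\mathrm{Div}_{\pa\Omega}$ and $\mathrm{Curl}_{\pa\Omega}$ become first-order scalar differential operators on $\R^2$, and use the Fourier characterization of $H^{s}$ together with the fact that the rotation $u\mapsto \nu\times u$ exchanges surface divergence and surface curl, to realize the two spaces as isometric mutual duals with the intrinsic graph norms they carry. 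Once this intrinsic characterization is in hand, non-degeneracy is immediate from the surjectivity of $\gamma_\tau$ and $\gamma_T$: if $\langle\phi,\psi\rangle=0$ for every $\phi\in H^{-1/2}(\mathrm{curl},\pa\Omega)$, then letting $u_2$ range over $H(\mathrm{curl},\Omega)$ forces the Stokes integral to vanish against every element, and hence $\psi=0$.
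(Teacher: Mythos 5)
The paper offers no proof of this lemma: it is quoted as classical background (``we will then note (without proof) some of the important results about these spaces''), with the reader referred to the literature such as \cite{Kirsch,Nedelec,Monk,BUF1,BUF2,GR}. Judged on its own, your outline is the standard argument and is essentially sound: the smooth Stokes identity via $\nabla\cdot(u_1\times u_2)$, the extension of the pairing to $H(curl,\Omega)$ using the trace theorem, with well-definedness checked through $\ker\gamma_\tau=\ker\gamma_T=H_0(curl,\Omega)$ and density of $C^\infty(\overline{\Omega})^3$, and finally the genuinely hard statement --- that each space realizes the \emph{full} dual of the other, not merely that the pairing is non-degenerate --- handled by localization, boundary flattening and the Fourier characterization as in \cite{BUF1,BUF2}. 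You are right to flag that last step as the hard part; note that your closing ``non-degeneracy is immediate'' argument only gives injectivity of the map into the dual, so the surjectivity (mutual adjointness) really does rest on the Buffa--Ciarlet-type intrinsic description you invoke, which here is deferred to the references rather than proved --- acceptable, and in the same spirit as the paper itself. One small but worthwhile observation from your sign bookkeeping: the divergence-theorem computation gives $\int_\Omega\left(u_1\cdot(\nabla\times u_2)-u_2\cdot(\nabla\times u_1)\right)dx=\langle\gamma_T\,u_1,\gamma_\tau\,u_2\rangle$, i.e.\ the roles of $u_1,u_2$ on the right-hand side are transposed (equivalently, the sign is flipped) relative to the formula as displayed in the statement; your ``track the sign carefully'' caveat is exactly where that discrepancy gets resolved.
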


Notice that if $u \in H_0(curl,\Omega)$, it does not imply that $u$ vanishes on the boundary $\pa \Omega$, but only that the \textit{tangential component} $\nu \times u \equiv 0$ on $\pa \Omega$. For example, let $B_1(0)$ represent the unit ball centered at the origin in $\R^3$ and consider the radial vector field $u(x)=x$ in $H(curl,B_1(0))$. Clearly, $u$ does not vanish on the boundary but its tangential component does, and therefore $u \in H_0(curl,B_1(0))$ by the above theorem. Similarly, if $u \in H_0(div,\Omega)$, we can only conclude that the normal component of $u$ vanishes on $\pa \Omega$. \\

An important property of these Sobolev spaces is the Div-Curl lemma, which is a crucial ingredient of the proof the homogenization result Theorem \ref{Homogenization}, as well as the proof of our Main Theorem in Section 5.  A proof can be found in \cite[Theorem 11.2]{BLP}.

\begin{theorem}[The Div-Curl Lemma]
Suppose 
\begin{eqnarray*}
u^n & \rightharpoonup & u \quad \mbox{in } H(curl,\Omega), \, \mbox{ and }\\
w^n & \rightharpoonup & w \quad \mbox{in } H(div,\Omega).
\end{eqnarray*}
Then
\[
u^n \cdot w^n \rightarrow u\cdot w \quad \mbox{in }\mathcal{D}'(\Omega), \]
that is,
\[
\int_\Omega u^n \cdot w^n \varphi \to \int_\Omega u \cdot w \varphi \qquad \forall \varphi \in C_c^\infty(\Omega). \]
\end{theorem}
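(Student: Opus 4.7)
The plan is to reduce, by linearity, to the case $u=w=0$ and to show that $\int \varphi\, u^n\cdot w^n\, dx \to 0$ for every $\varphi \in C_c^\infty(\Omega)$. Fix such a $\varphi$ and choose $\eta \in C_c^\infty(\Omega)$ with $\eta \equiv 1$ on $\operatorname{supp}\varphi$. The first move is to localize by replacing $u^n,w^n$ with $\tilde u^n := \eta u^n$ and $\tilde w^n := \eta w^n$, extended by zero to $\R^3$. The product rules
\[
\curl(\eta u^n) = \eta\, \curl u^n + \nabla\eta \times u^n, \qquad \divg(\eta w^n) = \eta\, \divg w^n + \nabla\eta\cdot w^n
\]
show that $\tilde u^n \rightharpoonup 0$ in $L^2(\R^3)$ with $\curl\tilde u^n \rightharpoonup 0$ in $L^2(\R^3)$, and analogously for $\tilde w^n$ and $\divg\tilde w^n$. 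Since $\varphi \eta^2 = \varphi$, the integral we want to control is $\int \varphi\, \tilde u^n \cdot \tilde w^n\, dx$.

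Next I perform a Helmholtz decomposition on $\R^3$: write $\tilde u^n = \nabla p^n + V^n$, where $p^n$ solves $\Delta p^n = \divg\tilde u^n$ via the usual Riesz-transform formula and $V^n := \tilde u^n - \nabla p^n$ is divergence-free. The $L^2$-boundedness of the Riesz transforms gives $\nabla p^n$ bounded in $L^2(\R^3)$, so, after normalizing the mean on a fixed ball, $p^n$ is bounded in $H^1_{\mathrm{loc}}(\R^3)$. For $V^n$, the vector identity $-\Delta V^n = \curl\curl V^n - \nabla\divg V^n = \curl\curl\tilde u^n$ combined with the $L^2$ bound on $\curl\tilde u^n$ gives local $H^1$ bounds on $V^n$. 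Rellich--Kondrachov then makes both $p^n$ and $V^n$ compact in $L^2_{\mathrm{loc}}(\R^3)$; since their weak limits are zero, they converge strongly to zero in $L^2_{\mathrm{loc}}$.

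To finish, split
\[
\int \varphi\, \tilde u^n \cdot \tilde w^n \, dx = \int \varphi\, V^n \cdot \tilde w^n\, dx + \int \varphi\, \nabla p^n \cdot \tilde w^n\, dx.
\]
The first integral vanishes in the limit because $V^n \to 0$ strongly in $L^2$ on $\operatorname{supp}\varphi$ while $\tilde w^n$ stays bounded in $L^2$. For the second, integration by parts (valid because $\varphi p^n$ has compact support) yields
\[
\int \varphi\, \nabla p^n \cdot \tilde w^n\, dx = -\int \varphi\, p^n\, \divg\tilde w^n\, dx - \int p^n\, \nabla\varphi \cdot \tilde w^n\, dx,
\]
and both pieces tend to zero by the strong $L^2_{\mathrm{loc}}$ convergence of $p^n$ against the $L^2$-boundedness of $\divg\tilde w^n$ and $\tilde w^n$, respectively.

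The main obstacle is the compactness step: one must upgrade mere weak convergence in $H(\curl)\times H(\divg)$ to strong local compactness of the two Helmholtz components. This is the only point where the assumptions really bite, via the elliptic identity $-\Delta = \curl\curl - \nabla\divg$, which converts one-sided control of $\curl$ (on the solenoidal piece) or $\divg$ (on the gradient piece) into full local $H^1$ regularity. Once this is in hand, the conclusion is a routine weak-strong pairing, and the argument is clearly insensitive to the geometry of $\Omega$ because everything has been cut off to a fixed ball.
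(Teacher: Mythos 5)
Your proof is correct. Note that the paper does not actually prove this lemma --- it only refers the reader to \cite[Theorem 11.2]{BLP} --- so there is no in-text argument to compare against; what you have written is a standard self-contained proof in the same spirit as the potential/compensated-compactness argument of the cited reference: reduce to $u=w=0$, cut off, split $\tilde u^n=\nabla p^n+V^n$ by the Helmholtz decomposition, gain one derivative on each piece from the available control, and finish by Rellich plus weak--strong pairing. Two small points should be made explicit. First, the reduction to $u=w=0$ is by \emph{bilinearity}, not linearity: you must note that the cross terms $\int\varphi\,u\cdot(w^n-w)$ and $\int\varphi\,(u^n-u)\cdot w$ vanish because a weakly null $L^2$ sequence is paired against the fixed $L^2$ functions $\varphi u$, $\varphi w$; trivial, but it is the step that lets you work with the difference sequence. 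Second, the compactness of $V^n$ can be obtained more directly than through the interior estimate for $-\Delta V^n=\curl\curl\tilde u^n$ (which, as stated, also needs the $L^2$ bound on $V^n$ itself, Caccioppoli-style, since the right-hand side is only in $H^{-1}$): on all of $\R^3$ one has, by Plancherel, $\|\nabla V\|_{L^2(\R^3)}^2=\|\curl V\|_{L^2(\R^3)}^2+\|\divg V\|_{L^2(\R^3)}^2$, so the divergence-free field $V^n$ with $\curl V^n=\curl\tilde u^n$ bounded in $L^2$ is bounded in $H^1(\R^3)$ outright. With these two points spelled out, the remaining ingredients --- extension by zero of the compactly supported cut-off fields to $H(curl,\R^3)$ and $H(div,\R^3)$, $L^2$-boundedness and weak continuity of the Riesz transforms (which identify the weak limits of $p^n$ and $V^n$ as zero), and the integration by parts of $\nabla p^n$ against $\varphi\tilde w^n\in H(div,\R^3)$ --- are all standard and correctly invoked, and the argument is complete.
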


We can now state the existence and uniqueness theorem for boundary value problems for the time harmonic Maxwell equations.

\begin{theorem}\label{existunique} Let $\widetilde{\eps},\mu,\sigma$ satisfy the conditions \eqref{regbig}-\eqref{regend} and let $J \in L^2(\Omega)^3$. Then there exists a discrete subset $F \subset \R$ such that for all $\omega \in \R \setminus F$, the following boundary value problem:
\begin{equation}\label{origmaxwell}
\begin{cases}
\nabla \times E = i\omega \mu H\\
\nabla \times H = -i\omega\lb \widetilde{\eps}+\frac{i\sigma}{\omega}\rb E + J\\
\nu \times E|_{\pa \Omega} = f \in H^{-1/2}(div,\pa \Omega)
\end{cases}
\end{equation}
has a unique solution $(E,H) \in H(curl,\Omega)\times H(curl,\Omega)$. Moreover, we have the estimate
\begin{equation*}
\|E\|_{H(curl, \Omega)} +\|H\|_{H(curl,\Omega)} \leq C(\omega)(\|f\|_{H^{-1/2}(div,\pa \Omega)}+\|J\|_{L^2(\Omega)}). 
\end{equation*}
On the other hand, if $\omega \in F$, there exist non-zero solutions to the corresponding homogeneous system
\begin{equation*}
\begin{cases}
\nabla \times E = i\omega \mu H\\
\nabla \times H = -i\omega\lb \widetilde{\eps}+\frac{i\sigma}{\omega}\rb E \\
\nu \times E|_{\pa \Omega} = 0.
\end{cases}
\end{equation*}
\end{theorem}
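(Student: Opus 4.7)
I would use the standard variational/Fredholm approach for the time-harmonic Maxwell system, adapted to a complex permittivity $\eps = \widetilde{\eps} + i\sigma/\omega$. The first step is to lift the boundary datum: by Theorem \ref{t3}, pick $E_0 = \eta_\tau f \in H(curl,\Omega)$ with $\gamma_\tau E_0 = f$ and $\|E_0\|_{H(curl)} \leq C \|f\|_{H^{-1/2}(div,\pa\Omega)}$, and set $\widetilde E = E - E_0 \in H_0(curl,\Omega)$. From the first equation in \eqref{origmaxwell}, $H = (i\omega\mu)^{-1}\curl E$, and substituting into the second equation gives the curl-curl equation
\[
\curl\bigl(\mu^{-1}\curl \widetilde E\bigr) - \omega^2 \eps\, \widetilde E = G_\omega \quad\text{in }\Omega,
\]
where $G_\omega \in L^2(\Omega)^3$ absorbs the contributions of $J$, $E_0$, $\widetilde\eps$, $\sigma$ and $\mu$. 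The weak formulation is: find $\widetilde E \in H_0(curl,\Omega)$ with
\[
a_\omega(\widetilde E, v) := \int_\Omega \mu^{-1}\curl \widetilde E \cdot \overline{\curl v}\,dx - \omega^2\int_\Omega \eps\, \widetilde E \cdot \bar v\,dx \; =\; \langle G_\omega, v\rangle, \qquad \forall\, v \in H_0(curl,\Omega).
\]

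The form $a_\omega$ is not coercive on $H_0(curl,\Omega)$ because $\curl$ annihilates gradients, so I would use the Helmholtz decomposition $H_0(curl,\Omega) = X_\eps \oplus \nabla H^1_0(\Omega)$ where $X_\eps = \{u \in H_0(curl,\Omega): \int_\Omega \eps\, u\cdot \nabla\bar\varphi\,dx = 0 \text{ for all }\varphi \in H^1_0(\Omega)\}$. The gradient component is controlled by a standard uniformly elliptic equation $\divg(\eps \nabla\varphi) = \ldots$ in $H^1_0(\Omega)$. On $X_\eps$, Weber's compactness theorem supplies a compact embedding $X_\eps \hookrightarrow L^2(\Omega)^3$, and the shifted form $a_\omega(u,u) + M\|u\|_{L^2}^2$ is $H(curl)$-coercive for $M$ sufficiently large (using uniform positive-definiteness of $\mu$ and of the real part of $\eps$), with the perturbation $-M\|u\|_{L^2}^2$ compact by Weber. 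Reassembling the two pieces recasts the boundary value problem as $(I - K_\omega) u = F_\omega$ with $K_\omega$ a compact operator on an appropriate Hilbert space.

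To get the discrete exceptional set $F$, note that $\omega \mapsto K_\omega$ is an analytic family on $\{\omega \in \C: \omega \neq 0\}$, and that $I - K_\omega$ is invertible for some $\omega$ — for instance, for $|\mathrm{Im}\,\omega|$ large enough the dominant term $-\omega^2\eps$ renders $a_\omega$ directly coercive. The Analytic Fredholm Theorem then yields a discrete set $F \subset \R$ off which $(I - K_\omega)^{-1}$ exists and is bounded; this gives existence, uniqueness, and the stated estimate for $\widetilde E$, hence for $E$. The magnetic field is recovered as $H = (i\omega\mu)^{-1}\curl E \in L^2(\Omega)^3$, and the second Maxwell equation gives $\curl H = -i\omega\eps E + J \in L^2(\Omega)^3$, so $H \in H(curl,\Omega)$ with the same bound. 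At $\omega \in F$, the finite-dimensional kernel of $I - K_\omega$ is non-trivial, yielding the claimed non-zero solutions to the homogeneous system.

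The main obstacle is the Weber-type compactness $X_\eps \hookrightarrow L^2(\Omega)^3$ for the merely $L^\infty$ anisotropic weight $\eps$; either one invokes a version valid for $L^\infty$ anisotropic weights in Lipschitz domains, or one treats the imaginary part $i\sigma/\omega$ as a compact lower-order perturbation of the problem weighted by the real permittivity $\widetilde\eps$. With this ingredient in hand, the rest of the argument is a routine application of the Fredholm machinery.
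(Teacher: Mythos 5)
The paper does not actually prove Theorem \ref{existunique}: it is stated as classical background, with the reader referred to \cite{Kirsch, Nedelec, Monk, BUF1, BUF2, GR}, so there is no in-text argument to compare against. Your sketch is precisely the standard proof found in those references (eliminate $H$, lift $f$ by $\eta_\tau$, split $H_0(curl,\Omega)$ into an $\eps$-divergence-free part plus gradients, use Weber-type compactness to reduce to a Fredholm alternative, and invoke analytic Fredholm theory to get the discrete exceptional set), and as a proposal it is essentially sound. Two technical points deserve explicit care if you were to write it out. First, because $\eps=\widetilde{\eps}+i\sigma/\omega$ itself depends on $\omega$, both the subspace $X_\eps$ and your operator $K_\omega$ vary with $\omega$; to apply the analytic Fredholm theorem cleanly you should set the pencil on a fixed Hilbert space, e.g.\ decompose with respect to the $\omega$-independent weight $\widetilde{\eps}$ (or the plain $L^2$ Helmholtz decomposition) and treat the $i\sigma/\omega$ contribution as an analytic, lower-order perturbation; with that arrangement analyticity on $\C\setminus\{0\}$ and invertibility at a purely imaginary frequency $\omega=i\kappa$ (where $\eps=\widetilde{\eps}+\sigma/\kappa$ is real positive definite and the form is genuinely coercive) give the discrete set $F$, with the caveat that discreteness is obtained in $\R\setminus\{0\}$. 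Second, the compact embedding of the constrained space into $L^2(\Omega)^3$ for merely $L^\infty$, anisotropic, complex-valued weights with uniformly positive definite real part is exactly the version established in the cited monographs (e.g.\ Monk, Weber), so your ``main obstacle'' is already covered by the literature the paper points to; flagging it and citing it is all that is needed.
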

We call $F$ the set of electromagnetic eigenvalues for $(\Omega;\eps,\mu)$.

\subsection{Cloaking for the Maxwell system}

In the rest of this paper, we assume that $J=0$, that is, there are no internal sources of current in the medium. Suppose $\omega \in \R $ is not an eigenvalue for $(\Omega;\eps,\mu)$. Define $\Lambda_{\eps,\mu}: H^{-1/2}(\mbox{div},\pa \Omega) \to H^{-1/2}(\mbox{div},\pa \Omega)$  by
\begin{equation*}
\Lambda_{\eps,\mu}(f) = \nu \times H|_{\pa \Omega},
\end{equation*}
where $(E,H)$ is the unique solution of \eqref{origmaxwell}. In other words, $\Lambda_{\eps,\mu}$ (also called the \emph{impedance map}) maps the tangential component of the electric field on the boundary to the tangential component of the magnetic field on the bounday. It follows from Theorem \ref{existunique} that $\Lambda_{\eps,\mu}$ is a continuous linear map. It is the analogue of the Dirichlet-to-Nuemann map for the Maxwell equations and encodes all the possible electromagnetic measurements that can be made at the boundary of $\Omega$.  

\begin{defn}\label{arb} Let $D \Subset \Omega$ be a smooth bounded subdomain and let $\widetilde{\eps_c},\mu_c$ and $\sigma_c$ be real symmetric matrix valued measurable functions on $\Omega \setminus \overline{D}$. Define $\eps_c = \widetilde{\eps_c}+i\sigma_c/\omega$ as before. We say that $(\Omega\setminus \overline{D};\eps_c,\mu_c)$ is a \textbf{cloak} for the region $D$, if for any \emph{regular} permittivity, permeability and conductivity $\widetilde{\eps_a},\mu_a,\sigma_a$ defined in $\Omega$, we have
\begin{equation*}
\Lambda_{\eps_e,\mu_e} = \Lambda_{I,I} \qquad \mbox{on } \pa \Omega,
\end{equation*}
where $I$ is the $3 \times 3$ identity matrix and $\eps_e,\mu_e$ are the electromagnetic parameters of the extended object:
\begin{equation*}
(\eps_e,\mu_e) = \begin{cases}
                          (\widetilde{\eps_c}+i\sigma_c/\omega,\mu_c) & \mbox{in } \Omega \setminus \overline{D}\\
                          (\widetilde{\eps_a}+i\sigma_a/\omega,\mu_a) & \mbox{in } D
                          \end{cases}
\end{equation*}
\end{defn}
In other words, the boundary measurements $\Lambda_{\eps_e,\mu_e}$ must be indistinguishable from the measurements one would get if $\Omega$ was a vacuum (which corresponds to $\eps=\mu=I$). We call $\Omega \setminus \overline{D}$ and $D$ the cloaking region and the cloaked region respectively. \\

The following invariance property of the Maxwell equations forms the basis of the transformation optics method of constructing electromagnetic cloaks: Let $\widetilde{\Omega}$ be another smooth bounded domain and suppose there exists a smooth diffeomorphism $F: \Omega \to \widetilde{\Omega}$.
\begin{lemma}\label{chcor} Suppose $(E,H) \in H(curl,\Omega)\times H(curl,\Omega)$ satisfies the Maxwell equations (\ref{maxwell}) in $\Omega$ with $J=0$. Define the EM fields
\begin{equation*}
\widetilde{E} = F_*E := (DF^t)^{-1}E\circ F^{-1}, \qquad \widetilde{H} = F_*H = (DF^t)^{-1}H\circ F^{-1}
\end{equation*}
on $\widetilde{\Omega}$. Then $(\widetilde{E},\widetilde{H}) \in H(curl,\widetilde{\Omega})\times H(curl,\widetilde{\Omega})$ and satisfy
\begin{equation}\label{maxwell2}
\begin{cases} 
\curl \widetilde{E} = i\omega\mu'\widetilde{H},\\
\curl \widetilde{H} = -i\omega\lb \widetilde{\eps}' +\frac{i\sigma'}{\omega}\rb \widetilde{E}
\end{cases}
\end{equation}
where $\widetilde{\eps}',\mu',\sigma'$ are the \emph{push-forwards} of $\widetilde{\eps},\mu,\sigma$ under $F$, defined by
\begin{eqnarray*}
\widetilde{\eps}' &=& F_* \widetilde{\eps} := \lb \frac{1}{\det(DF)}(DF) \cdot \widetilde{\eps}\cdot (DF)^t\rb \circ F^{-1},\\
\mu' &=& F_* \mu := \lb \frac{1}{\det(DF)}(DF) \cdot \mu\cdot (DF)^t\rb \circ F^{-1},\\
\sigma' &=& F_* \sigma := \lb \frac{1}{\det(DF)}(DF) \cdot \sigma\cdot (DF)^t\rb \circ F^{-1}.
\end{eqnarray*}
\end{lemma}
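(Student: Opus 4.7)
My plan is to reduce the lemma to the single identity
\[
\nabla_y \times \widetilde{E} \;=\; \left(\frac{1}{\det(DF)}\,(DF)\,(\nabla_x \times E)\right)\circ F^{-1},
\]
that is, to show that the curl (viewed as producing a vector density) commutes with the push-forward by $F$. Once this is in hand, substituting $\nabla_x\times E = i\omega \mu H$ and inserting the identity matrix $(DF)^t \cdot (DF^t)^{-1}$ between $\mu$ and $H$ gives
\[
\nabla_y\times\widetilde{E}
\;=\; i\omega\left(\tfrac{1}{\det(DF)}\,(DF)\,\mu\,(DF)^t\right)\circ F^{-1}\,\cdot\,\widetilde{H}
\;=\; i\omega\,\mu'\,\widetilde{H},
\]
which is the first equation of \eqref{maxwell2}. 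The second equation follows by the same manipulation applied to $\nabla_x\times H$ with $\mu$ replaced by $\widetilde{\eps}+i\sigma/\omega$, after which separating real and imaginary parts reads off the claimed rules for $\widetilde{\eps}'$ and $\sigma'$.

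For the key identity I would work first with $E\in C^\infty(\overline{\Omega})^3$. Writing $y=F(x)$ and $G=F^{-1}$, the coordinate form
\[
\widetilde{E}_m(y) \;=\; \sum_j (DG)_{jm}(y)\,E_j(G(y))
\]
is just the pull-back of a $1$-form under $G$. Expanding $(\nabla_y\times\widetilde{E})_k = \epsilon_{k\ell m}\,\partial_{y_\ell}\widetilde{E}_m$, where $\epsilon_{k\ell m}$ denotes the Levi-Civita symbol (and not the permittivity), the term involving $\partial^2_{y_\ell y_m} G_j$ cancels by antisymmetry in $(\ell,m)$ against the symmetry of mixed partials, leaving
\[
(\nabla_y\times\widetilde{E})_k \;=\; \epsilon_{k\ell m}\sum_{j,p}(DG)_{p\ell}(y)\,(DG)_{jm}(y)\,(\partial_p E_j)(G(y)).
\]
I would then apply the determinantal identity $\epsilon_{k\ell m}\,A_{p\ell}\,A_{jm} = \det(A)\,(A^{-1})_{kq}\,\epsilon_{qpj}$ with $A = DG(y)$; since $(DG)^{-1}=DF\circ G$ and $\det(DG)=(\det DF)^{-1}\circ G$, the right-hand side collapses to $\tfrac{1}{\det DF(G(y))}\,(DF)_{kq}(G(y))\,(\nabla_x\times E)_q(G(y))$, which is exactly the identity.

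To upgrade to $E\in H(\mathrm{curl},\Omega)$ I would use the density of $C^\infty(\overline{\Omega})^3$ in $H(\mathrm{curl},\Omega)$ mentioned just before the Trace Theorem, together with the change-of-variable bound $\|\widetilde{E}\|_{L^2(\widetilde{\Omega})}\le C\|E\|_{L^2(\Omega)}$ (from smoothness of $F$ and boundedness of the domains) and the analogous $L^2$-bound on curls provided by the identity itself; this simultaneously yields $\widetilde{E},\widetilde{H}\in H(\mathrm{curl},\widetilde{\Omega})$ and lets both equations of \eqref{maxwell2} pass to the limit in the distributional sense. The only genuine obstacle in the argument is the indicial bookkeeping of the determinantal step; conceptually the lemma just expresses the fact that $E$ and $H$ are naturally $1$-forms while $\eps E$ and $\mu H$ are naturally $2$-forms, so Maxwell's equations in the form $dE = i\omega\mu H$, $dH = -i\omega \eps E$ are automatically preserved under pull-back by diffeomorphisms because the exterior derivative commutes with pull-back.
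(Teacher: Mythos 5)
Your proof is correct, and it is worth noting that the paper itself does not prove Lemma \ref{chcor} at all: it simply refers the reader to \cite{Zhou}. What you supply is the standard direct argument, and all the key steps check out: the identification of $\widetilde{E}$ with the pull-back of the $1$-form $E_j\,dx_j$ under $G=F^{-1}$, the cancellation of the $\partial^2_{y_\ell y_m}G_j$ terms by antisymmetry of $\epsilon_{k\ell m}$, and the cofactor identity $\epsilon_{k\ell m}A_{p\ell}A_{jm}=\det(A)\,(A^{-1})_{kq}\,\epsilon_{qpj}$ with $A=DG$, which (using $(DG)^{-1}=DF\circ G$, $\det DG=(\det DF)^{-1}\circ G$) yields exactly the intertwining relation $\nabla_y\times\widetilde{E}=\bigl(\tfrac{1}{\det DF}\,DF\,(\nabla_x\times E)\bigr)\circ F^{-1}$; inserting $(DF)^t(DF^t)^{-1}$ then produces the push-forward rules for $\mu$ and $\widetilde{\eps}+i\sigma/\omega$ verbatim. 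One small imprecision in the final paragraph: the smooth approximants of $E$ and $H$ do not themselves satisfy Maxwell's equations, so it is not the equations \eqref{maxwell2} that ``pass to the limit''; rather, the density of $C^\infty(\overline{\Omega})^3$ in $H(curl,\Omega)$ and the $L^2$ continuity of $u\mapsto (DF^t)^{-1}u\circ F^{-1}$ (and of $v\mapsto \tfrac{1}{\det DF}(DF)v\circ F^{-1}$) let the curl identity itself pass to the limit, establishing both membership $\widetilde{E},\widetilde{H}\in H(curl,\widetilde{\Omega})$ and the formula for their weak curls; equations \eqref{maxwell2} then follow by substituting the a.e.\ identities $\nabla\times E=i\omega\mu H$ and $\nabla\times H=-i\omega(\widetilde{\eps}+i\sigma/\omega)E$, no limiting argument needed at that stage. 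With that rephrasing the argument is complete for smooth $F$ (the bi-Lipschitz refinement mentioned in the paper's remark would need the same computation with Rademacher differentials, which your calculation does not cover but the lemma as stated does not require).
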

A proof can be found in \cite{Zhou}. As an immediate consequence, we have the following corollary.
\begin{corollary}\label{chcorDN} Let $(\Omega;\eps,\mu)$ be as before and suppose $F:\overline{\Omega} \to \overline{\Omega}$ is a diffeomorphism such that $F|_{\pa \Omega} = Id$. Then
\begin{equation*}
\Lambda_{\eps,\mu} = \Lambda_{F_*\eps,F_*\mu} \qquad \mbox{on }\pa \Omega.
\end{equation*}
\end{corollary}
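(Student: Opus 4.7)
The plan is to use Lemma \ref{chcor} directly and then check that the tangential traces of both the electric and magnetic fields are preserved under the push-forward $F_*$ on $\partial\Omega$. Fix boundary data $f\in H^{-1/2}(\mathrm{div},\partial\Omega)$ and let $(E,H)$ be the unique solution of \eqref{origmaxwell} in $\Omega$ with parameters $(\epsilon,\mu)$, $J=0$, and $\nu\times E|_{\partial\Omega}=f$. Since $F:\overline\Omega\to\overline\Omega$ is a diffeomorphism, $\widetilde\Omega=\Omega$, and by Lemma \ref{chcor} the pushed-forward fields $(\widetilde E,\widetilde H)=(F_*E,F_*H)$ lie in $H(\mathrm{curl},\Omega)^2$ and satisfy Maxwell's equations with coefficients $(F_*\epsilon,F_*\mu)$ on $\Omega$. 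Note that these pushed-forward coefficients are still regular in the sense of \eqref{regbig}--\eqref{regend}, because $F$ and $F^{-1}$ are smooth on the compact set $\overline\Omega$, so uniform ellipticity and non-negativity are preserved.

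The crux of the proof is the boundary identity $\gamma_\tau \widetilde E=\gamma_\tau E$ and $\gamma_\tau \widetilde H=\gamma_\tau H$ on $\partial\Omega$. The key observation is that $F|_{\partial\Omega}=\mathrm{Id}$ forces the derivative $DF(x)$ to act as the identity on the tangent space $T_x\partial\Omega$ for each $x\in\partial\Omega$ (since tangential derivatives of $F$ at the boundary are tangential derivatives of the identity). Equivalently, $DF^{-1}(x)\tau=\tau$ for every tangent vector $\tau$. Using $(DF^t)^{-1}=(DF^{-1})^t$, I compute for any $u\in\mathbb{R}^3$ and any tangent $\tau\in T_x\partial\Omega$:
\[
\bigl((DF^t)^{-1}u\bigr)\cdot\tau \;=\; u\cdot DF^{-1}\tau \;=\; u\cdot\tau .
\]
Hence $(DF^t)^{-1}u-u$ is normal to $\partial\Omega$ at $x$, so $\nu\times\bigl((DF^t)^{-1}u\bigr)=\nu\times u$ on $\partial\Omega$. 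Applying this pointwise to $u=E(x)$ and $u=H(x)$ (and noting $F^{-1}(x)=x$ on $\partial\Omega$) yields $\nu\times\widetilde E|_{\partial\Omega}=\nu\times E|_{\partial\Omega}=f$ and $\nu\times\widetilde H|_{\partial\Omega}=\nu\times H|_{\partial\Omega}$.

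Thus $(\widetilde E,\widetilde H)$ solves \eqref{origmaxwell} on $\Omega$ with coefficients $(F_*\epsilon,F_*\mu)$ and boundary data $f$; by uniqueness from Theorem \ref{existunique} (assuming $\omega$ is not an eigenvalue for either parameter set, which is an implicit hypothesis), we conclude
\[
\Lambda_{F_*\epsilon,F_*\mu}(f)\;=\;\nu\times\widetilde H|_{\partial\Omega}\;=\;\nu\times H|_{\partial\Omega}\;=\;\Lambda_{\epsilon,\mu}(f),
\]
for every admissible $f$, which is the desired equality.

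The pointwise computation at the boundary is elementary, so the only real concern is rigorously justifying the tangential-trace identity in the distributional sense of $H^{-1/2}(\mathrm{div},\partial\Omega)$ rather than pointwise; this is handled by the density of $C^\infty(\overline\Omega)^3$ in $H(\mathrm{curl},\Omega)$ together with continuity of $\gamma_\tau$ from Theorem \ref{t3}, so the identity extends from smooth $E,H$ to the general finite-energy case. This density/continuity step is the main technical point to be careful about, but it is standard.
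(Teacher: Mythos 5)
Your proposal is correct and follows essentially the same route as the paper: push the solution $(E,H)$ forward via Lemma \ref{chcor}, use $F|_{\pa\Omega}=Id$ (hence $DF$ acting as the identity on each tangent space $T_x\pa\Omega$) to see that the tangential traces of $F_*E$ and $F_*H$ agree with those of $E$ and $H$, and conclude by uniqueness that $\nu\times\widetilde H|_{\pa\Omega}=\nu\times H|_{\pa\Omega}$. You merely spell out details the paper leaves implicit (the identification of $(\widetilde E,\widetilde H)$ with $(F_*E,F_*H)$ via uniqueness, the non-eigenvalue issue for the pushed-forward parameters, and the density argument for the trace identity), which is fine.
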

\begin{proof}
Fix $f \in H^{-1/2}(div,\pa \Omega)$. Let $(E,H)$ and $(\widetilde{E},\widetilde{H})$ be the solutions of (\ref{maxwell}) (with $J=0$) and (\ref{maxwell2}) respectively, with the boundary condition
$$ \nu \times E = \nu \times \widetilde{E} = f.$$
Note that $F|_{\pa \Omega} = Id$ and consequently, $(DF_x)|_{T_x \pa \Omega} =Id$ for all $x \in \pa \Omega$. Therefore
\begin{equation*}
\Lambda_{F_*\eps,F_*\mu}f = \nu \times \widetilde{H}|_{\pa \Omega} = (DF^t)^{-1}(\nu \times H)\circ F^{-1}|_{\pa \Omega} =\nu \times H|_{\pa \Omega} =\Lambda_{\eps,\mu}f.
\end{equation*}
\end{proof}

\begin{remark} In Lemma \ref{chcor} and Corollary \ref{chcorDN}, it is sufficient to assume that $\Omega, \widetilde{\Omega}$ are Lipschitz and that $F$ is bi-Lipschitz.
\end{remark}

We now give a brief account of the singular ideal cloak and the regular approximate cloak constructed in \cite{FullWave} and \cite{NearCloak} respectively. Henceforth, let $B_r$ denote $\{x\in \R^3:|x|<r\}$. Consider the map $F: \overline{B}_2 \setminus \{0\} \to \overline{B}_2 \setminus \overline{B}_1$ given by
\begin{equation*}
F(x) = \lb 1+\frac{|x|}{2}\rb \frac{x}{|x|},
\end{equation*}
which blows up the Origin to the cloaked region $\overline{B}_1$. The central idea of transformation optics-based electromagnetic cloaking is to use the invariance properties \ref{chcor} and \ref{chcorDN} to ``hide" an arbitrary regular electromagnetic object in $B_1$. Notice however, that $F$ is not a regular change of coordinates, as $\det (DF) \to 0$ as $|x|\to 0$. Define the electromagnetic parameters in $\overline{B}_2\setminus \overline{B}_1$ to be the pushforwards of the identity under the coordinate transformation $F$:
\begin{equation*}
\widetilde{\eps}(x) = \widetilde{\mu}(x) = F_*I = \frac{(DF)\cdot I \cdot (DF)^t}{|\det DF|}(F^{-1}(x)), \quad 1<|x|\leq 2. 
\end{equation*}
More explictly,
\begin{equation*}
\widetilde{\eps}=\widetilde{\mu} = 2\frac{(|x|-1)^2}{|x|^2}\Pi(x) +2(I-\Pi(x))
\end{equation*}
where $\Pi(x)$ is the projection map in the radial direction.
\begin{equation*}
\Pi(x)w = \lb w\cdot \frac{x}{|x|}\rb \frac{x}{|x|}.
\end{equation*}
It is easy to see that one of the eigenvalues of $\eps$ (and $\mu$), namely, the one corresponding to the radial direction goes to $0$ as $|x| \to 1$. Therefore, the regularity conditions \eqref{regbig}-\eqref{regend} no longer hold. Again consider the extended object
\begin{equation*}
(B_2;\widetilde{\eps}_e,\widetilde{\mu}_e) := 
\begin{cases}
(B_2\setminus \overline{B}_1;\widetilde{\eps},\widetilde{\mu}) & \mbox{in }B_2 \setminus \overline{B}_1\\
(B_1;\eps_a,\mu_a) & \mbox{in }B_1
\end{cases}
\end{equation*}
where $\eps_a$ and $\mu_a$ are arbitrary. It was shown in \cite{FullWave} that distributional solutions $(\widetilde{E},\widetilde{H})$ of the Maxwell equations corresponding to $\widetilde{\eps}_e$ and $\widetilde{\mu}_e$ satisfying certain ``finite energy" conditions have the same Cauchy data as the solutions of the Maxwell equations in free space. More precisely, if $(E,H) \in H(curl,\Omega)\times H(curl,\Omega)$ is the solution of
\begin{equation}\label{freespace}
\begin{cases}
\curl E = i\omega H, \\
\curl H = -i\omega E,\\
\nu \times E|_{\pa \Omega} = \nu \times \widetilde{E}|_{\pa \Omega}
\end{cases}
\end{equation}
then
$$ \nu \times \widetilde{H}|_{\pa\Omega} = \nu \times H|_{\pa \Omega}.$$
Thus, $(B_2\setminus \overline{B}_1;\widetilde{\eps},\widetilde{\mu})$ forms a perfect cloak for the region $B_1$. However, the singular nature of the parameters of this cloak poses serious challenges to practical implementation as well as theoretical analysis. Therefore, it is natural to consider regular approximations to the perfect cloak. As a trade-off, we lose the perfect cloaking of the singular construction and instead obtain an \emph{approximate} cloak.

\begin{defn}\label{appr} Let $D \Subset \Omega$ be a smooth bounded subdomain and let $\widetilde{\eps_c}^\rho,\mu_c^\rho$ and $\sigma_c^\rho$ be real symmetric matrix valued measurable functions on $\Omega \setminus \overline{D}$, indexed by a positive real number $\rho$. Define $\eps_c^\rho = \widetilde{\eps_c}^\rho+i\sigma_c^\rho/\omega$ as before. We say that $(\Omega\setminus \overline{D};\eps_c^\rho,\mu_c^\rho)$ is an \textbf{approximate cloak} for the region $D$, if for any \emph{regular} permittivity, permeability and conductivity $\widetilde{\eps_a},\mu_a,\sigma_a$ defined in $\Omega$, we have
\begin{equation*}
\|\Lambda_{\eps_e^\rho,\mu_e^\rho} - \Lambda_{I,I}\|_{\mathcal{L}(H^{-1/2}(div,\pa \Omega), H^{-1/2}(div,\pa \Omega))} \to 0 \qquad \mbox{as } \rho \to 0+.
\end{equation*}
where $I$ is the $3 \times 3$ identity matrix and, as before $\eps_e^\rho,\mu_e^\rho$ are the electromagnetic parameters of the extended object:
\begin{equation*}
(\eps_e^\rho,\mu_e^\rho) = \begin{cases}
                          (\widetilde{\eps_c^\rho}+i\sigma_c^\rho/\omega,\mu_c) & \mbox{in } \Omega \setminus \overline{D}\\
                          (\widetilde{\eps_a}+i\sigma_a/\omega,\mu_a) & \mbox{in } D
                          \end{cases}
\end{equation*}
Here, $\|\cdot\|_{\mathcal{L}(H^{-1/2}(div,\pa \Omega),H^{-1/2}(div,\pa \Omega))}$ denotes the standard operator norm on bounded linear maps from $H^{-1/2}(div,\pa \Omega)$ to itself.
\end{defn}

Two different approximation schemes have been proposed for approximate cloaking. In \cite{GKLU20}, Greenleaf et. al. proposed that one use the transformation $F$ to blow up a small ball $B_\rho$ to a larger ball $B_R$, where $1<R<2$, whereas Kohn et. al. in \cite{KSVW} use a transformation similar to $F$ to blow up $B_\rho$ to $B_1$. In the present paper, we work with the latter approximation scheme. However, it is known that the former approximation scheme gives a similar performance \cite{Zhou}. \\

Now, for $0 < \rho < 1$, consider the bi-Lipschitz transformation $F_\rho: \overline{B}_3 \to \overline{B}_3$ given by
\begin{equation}\label{frho}
F_\rho (x) = \begin{cases}
x & \mbox{for } 2\leq |x| \leq 3,\\
\lb \frac{2(1-\rho)}{2-\rho}+\frac{|x|}{2-\rho}\rb \frac{x}{|x|} & \mbox{for }\rho < |x| \leq 2,\\
\frac{x}{\rho} & \mbox{for }|x|\leq \rho,
\end{cases}
\end{equation}
We see that $F_\rho$ dilates $B_\rho$ to $B_1$ and retracts $\overline{B}_3 \setminus B_\rho$ to $\overline{B}_3 \setminus B_1$. Also note that we are working in a slightly larger ball $B_3$ rather than $B_2$ and that $F_\rho = Id$ in the annulus $\overline{B}_3 \setminus B_2$. The advantage of this modification is that the pushed-forward electric permittivity and magnetic permeability will both be identity in a neighbourhood of the boundary of our domain, which will prove useful in the proof of our Main Theorem in Section 5. 

\begin{theorem}[\cite{NearCloak}]\label{Nearcloak} Given $0<\rho<1$, consider the electromagnetic object $(B_3,\eps_e^\rho,\mu_e^\rho)$ with electromagnetic parameters defined by
\begin{equation}\label{rho}
(\eps_e^\rho,\mu_e^\rho) = \begin{cases}
 ((F_\rho)_*I, (F_\rho)_*I) & \mbox{in } B_3\setminus B_1,\\
( (F_\rho)_*(1+i\rho^{-2}/\omega)I, (F_\rho)_*I)& \mbox{in } B_1 \setminus B_{1/2},\\
(\eps_a,\mu_a)&  \mbox{in } B_{1/2}
\end{cases}
\end{equation}
where $\eps_a, \mu_a$ are arbitrary and regular. Suppose also that $\omega \in \R$ is not an EM eigenvalue of $(B_3;I,I)$. Then there exists $\rho_0 > 0$ such that for all $0 < \rho <\rho_0$, $\omega$ is not an eigenvalue for $(B_3,\eps_e^\rho,\mu_e^\rho)$ for any choice of regular $\eps_a,\mu_a$. Moreover,
\[
\|\Lambda_{\eps_e^\rho,\mu_e^\rho}-\Lambda_{I,I}\|_{\mathcal{L}(H^{-1/2}(div,\pa B_3), H^{-1/2}(div,\pa B_3))} \to 0 \quad \mbox{as } \rho \to 0+. \]
\end{theorem}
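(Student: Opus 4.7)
My plan is to transfer the problem to a fixed reference domain via pull-back by $F_\rho^{-1}$, exploit the highly lossy shell in $B_\rho \setminus B_{\rho/2}$ to decouple the boundary measurements from the arbitrary interior inclusion, and track the $\rho$-dependence of every estimate to obtain operator-norm (not merely pointwise) convergence.

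First, I would apply Lemma~\ref{chcor} to the bi-Lipschitz map $G_\rho := F_\rho^{-1}$. Since $G_\rho = \mathrm{Id}$ on $\overline{B_3}\setminus B_2$, Corollary~\ref{chcorDN} (extended to the bi-Lipschitz setting as in the remark after it) identifies $\Lambda_{\eps_e^\rho,\mu_e^\rho}$ with $\Lambda_{\eps^\rho,\mu^\rho}$, where the pulled-back parameters on $B_3$ are vacuum on $B_3 \setminus B_\rho$, the constant lossy medium $\eps = (1+i\rho^{-2}/\omega)I$, $\mu = I$ on the shell $B_\rho \setminus B_{\rho/2}$, and the rescaled arbitrary $\rho^{-1}\eps_a(\cdot/\rho)$, $\rho^{-1}\mu_a(\cdot/\rho)$ on $B_{\rho/2}$. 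The task is now to show that vacuum wrapped in a small, highly dissipative shell looks externally like pure vacuum as $\rho \to 0$.

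Second, I would write the standard variational formulation on $B_3$ with these parameters and test against the solution $(E_\rho, H_\rho)$ itself. Taking the imaginary part yields the dissipation estimate
\[
\omega\rho^{-2}\|E_\rho\|_{L^2(B_\rho \setminus B_{\rho/2})}^2 \leq C\,\|f\|_{H^{-1/2}(\mathrm{div},\pa B_3)}\,\|(E_\rho,H_\rho)\|_{H(\mathrm{curl},B_3)}.
\]
Combined with the real part and a Fredholm alternative argument (using that $\omega$ is not a vacuum eigenvalue), this gives a uniform $H(\mathrm{curl})$ bound and simultaneously rules out eigenvalues of $(\eps_e^\rho,\mu_e^\rho)$ for small $\rho$, uniformly in $(\eps_a,\mu_a)$. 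In particular $\|E_\rho\|_{L^2(B_\rho \setminus B_{\rho/2})} = O(\rho\|f\|)$. Third, I would compare $(E_\rho, H_\rho)|_{B_3 \setminus \overline{B_\rho}}$ with the vacuum solution $(E_0, H_0)$ of boundary data $f$. Their difference solves the vacuum Maxwell system on $B_3 \setminus \overline{B_\rho}$ with zero tangential trace on $\pa B_3$; rescaling the shell to unit size, the dissipation bound gives $\|\nu \times (E_\rho - E_0)|_{\pa B_\rho}\|_{H^{-1/2}(\mathrm{div})} = O(\rho^{1/2}\|f\|)$. Continuous dependence of the exterior vacuum problem on its inner trace (with constants independent of $\rho$, obtained by a capacity-type argument near the shrinking obstacle) then yields $\|\nu \times (H_\rho - H_0)|_{\pa B_3}\|_{H^{-1/2}(\mathrm{div})} = O(\rho^{1/2}\|f\|)$, from which the operator-norm convergence follows.

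The hard part, in my view, is step two: the uniform-in-$(\eps_a,\mu_a)$ Fredholm estimate. The shell's large imaginary permittivity gives coercivity on the annulus, but the rescaled parameters $(\eps_a(\cdot/\rho),\mu_a(\cdot/\rho))/\rho$ make the interior problem non-standard, since the material is singular and completely uncontrolled. The argument must show that the lossy shell physically shields the interior, so that the interior energy is bounded purely in terms of the tangential traces of $(E_\rho, H_\rho)$ on $\pa B_{\rho/2}$, and these traces are in turn controlled by the damping estimate. The secondary subtlety, essential for operator-norm as opposed to merely pointwise strong convergence, is to track the $\rho$-dependence of every constant and to verify that the capacity-type corrections near the small ball $B_\rho$ do not degrade the $O(\rho^{1/2})$ rate.
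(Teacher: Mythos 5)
First, note that this theorem is not proved in the paper at all: it is imported verbatim from \cite{NearCloak} (the paper even remarks that the cited result is stated there in greater generality), so there is no internal argument to compare against, and any assessment of your proposal must be on its own merits. Your outline does follow the general strategy of the cited literature: pull back by $F_\rho^{-1}$ (legitimate, since $F_\rho$ is bi-Lipschitz, equals the identity near $\pa B_3$, and the remark after Corollary \ref{chcorDN} covers the bi-Lipschitz case), reduce to a small inclusion $B_\rho$ consisting of a highly lossy shell $B_\rho\setminus B_{\rho/2}$ wrapping the rescaled arbitrary medium $\rho^{-1}\eps_a(\cdot/\rho)$, $\rho^{-1}\mu_a(\cdot/\rho)$, and then show a small lossy inclusion is invisible in the limit. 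The pulled-back parameters you compute are correct, and keeping the factor $\|f\|$ throughout is the right way to aim for operator-norm rather than pointwise convergence.

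The genuine gap is exactly at the point you flag but do not resolve: the uniform a priori estimate and the exclusion of eigenvalues for small $\rho$, \emph{uniformly in the arbitrary interior parameters}. Your dissipation estimate presupposes the bound $\|(E_\rho,H_\rho)\|_{H(curl,B_3)}\le C\|f\|$ with $C$ independent of $\rho$ and of $(\eps_a,\mu_a)$, but "the real part plus a Fredholm alternative" does not deliver this: the time-harmonic Maxwell form is not coercive when $\mu$ is real (this is precisely why the present paper must perturb by $(1+i\delta)$ in Section 4), and the usual compactness/contradiction route for Fredholm estimates produces constants depending on the coefficients, which here blow up like $\rho^{-1}$ and $\rho^{-2}$ inside a shrinking ball and are otherwise completely uncontrolled in $B_{\rho/2}$. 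Circular reasoning threatens: the uniform bound is needed to exploit the lossy layer, while the lossy layer is what is supposed to yield the uniform bound. The cited proof breaks this circle by a genuinely quantitative analysis of the small-inclusion problem (separating the interior problem, whose influence on the exterior is controlled solely through tangential traces on $\pa B_{\rho/2}$ damped by the conducting layer, together with explicit estimates, e.g.\ spherical-wave/layer-potential expansions, for the exterior problem with a shrinking obstacle); none of that machinery is supplied in your sketch. The same applies to the asserted $O(\rho^{1/2})$ trace bound on $\pa B_\rho$ (the scaling of the $H^{-1/2}(div)$ norm under dilation and the need to control $\curl E_\rho$ as well as $E_\rho$ are not addressed) and to the "capacity-type" uniform continuous-dependence estimate for the exterior problem. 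As it stands, the proposal is a correct road map whose two central analytic steps are assumed rather than proved.
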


\begin{remark}
The above theorem has been stated in greater generality in \cite{NearCloak}, with $B_3$ and $B_1$ replaced by arbitrary bounded simply connected smooth domains. But the specific case we have considered is sufficient for our purposes.
\end{remark}

Notice that apart from regularizing $F$, we have added a layer of high conductivity in the region $B_1 \setminus B_{1/2}$. This additional layer was shown to be essential in \cite{Zhou}, since in the absence of this layer, one can have $\eps_a,\mu_a$ that make $\omega$ an eigenvalue.

\section{Homogenization and the Maxwell equations}

In this section, we recall the notion of $H$-convergence and present a homogenization result for the Maxwell equations.
\begin{defn} Let $0 < \alpha < \beta < \infty$ and let $\Omega \subset \R^N$ be open. We define $\mathcal{M}_{\mathbb{R}}(\alpha, \beta;\Omega)$ to be the set of all real $N\times N$ matrix-valued measurable functions $A(x)=[a_{kl}(x)]_{1\leq k,l\leq N}$ defined almost everywhere on $\Omega$  such that 
\begin{eqnarray*} (A(x)\xi,\xi)&=& \sum_{k,l=1}^Na_{kl}(x)\xi_k\xi_l \geq \alpha|\xi|^2, \quad \mbox{and} \\
|A(x)\xi| &\leq & \beta|\xi|
\end{eqnarray*}
for all $\xi \in \mathbb{R}^N$ and  a.e. $x\in\Omega$.\\

Analogously, in the complex case, we define $\mathcal{M}_{\mathbb{C}}(\alpha, \beta;\Omega)$ to be the set of all complex $N\times N$ matrix-valued measurable functions  $A(x)$ defined almost everywhere on $\Omega$ such that
\begin{eqnarray*}
-\mathrm{i}\xi\cdot\lb A(x)-A(x)^{\dagger}\rb\overline{\xi} &\geq & \alpha|\xi|^2, \quad \mbox{and}\\
\ |A(x)\xi| &\leq & \beta|\xi|
\end{eqnarray*}
for all $\xi \in \mathbb{C}^N$ and a.e. $x\in\Omega$. Here $A(x)^{\dagger} = (\overline{A(x)})^t = \overline{A(x)^t}$ denotes the Hermitian conjugate of $A(x)$. Note that $A(x) \in \mathcal{M}_\C(\alpha,\beta;\Omega)$ implies that $\Re(A(x))$ (real part of the matrix $A(x)$) is symmetric, i.e. $\Re(A(x))_{kl}=\Re(A(x))_{lk}$ for all $1\leq k,l\leq N$.

We will say $A\in \mathcal{M}(\alpha,\beta;\Omega)$ if either $A\in\mathcal{M}_{\mathbb{R}}(\alpha,\beta;\Omega)$ or $A\in\mathcal{M}_{\mathbb{C}}(\alpha,\beta;\Omega)$.
\end{defn}

Next, we define the notion of $H$-convergence \cite{A,T}:
\begin{defn}
Let $A^n\in \mathcal{M}(\alpha,\beta; \Omega)$ for $ n \in \N$ and $A^{*}\in \mathcal{M}(\widetilde{\alpha},\widetilde{\beta}; \Omega)$. We say 
\[\mbox{$A^n \xrightarrow{H} A^{*}$ or $H$-converges to 
 $A^{*}$}\]
if for all test sequences $u^n \in H^1(\Omega)$ satisfying 
\begin{align*}
u^{n} &\rightharpoonup u \quad\mbox{weakly in }H^1(\Omega)\\
-\nabla \cdot(A^n\mathbb\nabla u^n)& \mbox{ is strongly convergent in } H^{-1}(\Omega).
\end{align*}
we have $A^n\nabla u^n \rightharpoonup A^{*}\nabla u$ in $L^2(\Omega)^N$. We call $A^*$ the homogenized matrix for the sequence $\{A^n\}$.
\end{defn}

To illustrate the utility of  $H$-convergence, let us consider the following sequence of elliptic boundary value problems: Let $A^n$ be a sequence of matrices in $\mathcal{M}(\alpha,\beta;\Omega)$ such that $A^n \xrightarrow{H} A^*$ and let $u_n$ be the solutions of
\begin{equation*}
\begin{cases}
-\nabla \cdot (A^n\nabla u_n) = f \in H^{-1}(\Omega),\\
u_n|_{\pa \Omega} = 0.
\end{cases}
\end{equation*}
It is easy to see that $\|u_n\|_{H^1(\Omega)} \leq C$ for some constant $C$ independent of $n$. Therefore, there exists a subsequence, which we still denote by $u_n$ that converges weakly to some $u \in H_0^1(\Omega)$. Now, the definition of $H$-convergence implies that $A^n\nabla u_n \rightharpoonup A^*\nabla u$ as $n \to \infty$. Consequently, $0 = -\nabla \cdot (A^n \nabla u_n) \rightharpoonup -\nabla \cdot (A^*\nabla u)$. Therefore, the weak limit $u$ of $u_n$ is in fact the solution of the following ``homogenized" problem:
\begin{equation*}
\begin{cases}
-\nabla \cdot (A^*\nabla u) = f \quad \mbox{in }\Omega,\\
u|_{\pa \Omega} = 0.
\end{cases}
\end{equation*}

\subsection{Homogenization with periodic micro-structures}
For some types of sequences $A^n$, the homogenized matrix $A^*$ can be explicitly computed. Let us consider the class of periodic micro-structures as an example. Let $Y$ denote the unit cube $[0,1]^N$ in $\mathbb{R}^N$.
Let $A(y)=[a_{kl}(y)]_{1\leq k,l\leq N}\in \mathcal{M}(\alpha,\beta; Y)$  be such that $a_{kl}(y)$ are $Y$-periodic functions
$ \forall k,l =1,2..,N$, that is, $a_{kl}(y + z) = a_{kl}(y)$ whenever $z \in \mathbb{Z}^N$ and $y \in Y$. 
Now we set 
\[
A^{n}(x) = [a_{kl}^{n}(x)]= [a_{kl}(nx)], \qquad x \in [0,1/n]^N
\] and extend 
it to all of $\mathbb{R}^N$ by $1/n$-periodicity. The restriction of $A^{n}$ to $\Omega$ is known as a periodic micro-structure. Such micro-structures arise in the study of physical systems where the parameters vary periodically, with a period that is very small compared to the dimensions of the object under consideration.\\

In this classical case, the homogenized conductivity $A^{*}=[a^{*}_{kl}]$ is a constant matrix whose entries are given by \cite{A,BLP}
\begin{equation*}
a^{*}_{kl} = \int_{Y}\sum_{i,j=1}^Na_{ij}(y)\frac{\partial}{\partial y_i}(\chi_k(y) + y_k)\frac{\partial}{\partial y_j}(\chi_l(y) + y_l)dy
\end{equation*}
where we define the $\chi_k$ through the so-called cell-problems. For each canonical basis vector $e_k$,  $\chi_k$ is defined to be the unique solution of the conductivity problem in the periodic unit cell :
\begin{equation*}
-\nabla_y \cdot (\ A(y)(\nabla_y\chi_k(y)+e_k)) = 0 \quad\mbox{in }\mathbb{R}^N,\quad y \mapsto \chi_k(y) \quad\mbox{is $Y$-periodic }
\end{equation*}
in the Sobolev space $H^1_{\#,0}(Y)$ defined by
\[
H^1_{\#,0}(Y) := \left\{ f \in H^1_{loc}(\R^N): \, y \mapsto f(y) \mbox{ is $Y$-periodic}, \, \int_Y f(y)dy =0 \right\}. \]
We can generalize the above case to what are called \textit{locally periodic} micro-structures. Let  $A(x,y)=[a_{kl}(x,y)]_{1\leq k,l\leq N}\in \mathcal{M}(\alpha,\beta; \Omega\times Y)$  be such that $a_{kl}(x,\cdot)$ are $Y$-periodic functions with respect to the second variable $ \forall k,l =1,2,..,N$ and for almost every $x$ in $\Omega$. Now we set 
\[
A^{n}(x) = [a_{kl}^{n}(x)]= [a_{kl}(x,nx)]
\]
Then the homogenized conductivity $A^{*}(x)=[a^{*}_{kl}(x)]$ is given by \cite{BLP,OL}
\begin{equation}\label{localperiod}
a^{*}_{kl}(x) = \int_{Y}\sum_{i,j=1}^Na_{ij}(x,y)\frac{\partial}{\partial y_i}(\chi_k(x,y) + y_k)\frac{\partial}{\partial y_j}(\chi_l(x,y) + y_l)dy
\end{equation}
where  $\chi_k(x,\cdot)\in H^1_{\#,0}(Y)$ solves the following cell problem for almost every $x$ in $\Omega$:
\[
-\nabla_y \cdot (\ A(x,y)(\nabla_y\chi_k(x,y)+e_k)) = 0 \quad\mbox{in }\mathbb{R}^N,\quad y \mapsto \chi_k(x,y) \quad\mbox{is $Y$-periodic. }
\]

Notice that even if each $A^n(x)$ is a scalar matrix, the homogenized matrix $A^*(x)$ need not be scalar valued. This is the crucial property of homogenization that allows us to approximate anisotropic permittivities and permeabilities by isotropic ones. We note two important properties of $H$-convergence:
\begin{proposition}\label{homprop} Suppose $A^n \xrightarrow{H} A^*$ in $\Omega$. Then
\begin{enumerate}
\item $A^n|_{\Omega'} \xrightarrow{H} A^*|_{\Omega'}$ for all open sets $\Omega' \subset \Omega$.
\item For any matrix-valued function $M(x)$, if $MA^n \in \mathcal{M}(\alpha,\beta,\Omega)$, then $MA^n \xrightarrow{H} MA^*$. 
\end{enumerate}
\end{proposition}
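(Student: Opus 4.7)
The plan is to use the two main tools of $H$-convergence theory: Tartar's oscillating test functions associated with $(A^n)^t$, and the Div-Curl Lemma stated earlier. Since $A^n \xrightarrow{H} A^*$ on $\Omega$, one has, for each canonical basis vector $e_k \in \R^N$, a sequence $\chi_k^n \in H^1(\Omega)$ with $\chi_k^n \rightharpoonup x_k$ weakly in $H^1(\Omega)$, $(A^n)^t \nabla \chi_k^n \rightharpoonup (A^*)^t e_k$ weakly in $L^2(\Omega)^N$, and $-\nabla \cdot((A^n)^t \nabla \chi_k^n)$ lying in a compact subset of $H^{-1}(\Omega)$. Restriction of these correctors to any open $\Omega' \subset \Omega$ preserves all three properties.

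For part (1), let $u^n \in H^1(\Omega')$ with $u^n \rightharpoonup u$ weakly in $H^1(\Omega')$ and $-\nabla \cdot(A^n \nabla u^n) \to f$ strongly in $H^{-1}(\Omega')$. Since $A^n \nabla u^n$ is bounded in $L^2(\Omega')^N$, it has a weak cluster point $w$, and the goal is to identify $w = A^* \nabla u$. Apply the Div-Curl Lemma on $\Omega'$ in two ways. First, to $\nabla u^n$ (curl-free) paired with $(A^n)^t \nabla \chi_k^n$ (whose divergence is $H^{-1}$-compact), giving
$$ A^n \nabla u^n \cdot \nabla \chi_k^n \;=\; \nabla u^n \cdot (A^n)^t \nabla \chi_k^n \;\rightharpoonup\; \nabla u \cdot (A^*)^t e_k \;=\; (A^* \nabla u)_k \quad \text{in } \mathcal{D}'(\Omega'). $$
Second, to $A^n \nabla u^n$ (whose divergence is $H^{-1}$-strongly convergent by hypothesis) paired with $\nabla \chi_k^n$ (curl-free), giving
$$ A^n \nabla u^n \cdot \nabla \chi_k^n \;\rightharpoonup\; w \cdot e_k \;=\; w_k \quad \text{in } \mathcal{D}'(\Omega'). $$
Uniqueness of distributional limits forces $w_k = (A^* \nabla u)_k$ for every $k$, hence $w = A^* \nabla u$. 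Because every weak cluster point coincides, the full sequence $A^n \nabla u^n$ converges weakly to $A^* \nabla u$ in $L^2(\Omega')^N$.

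For part (2), let $v^n \rightharpoonup v$ in $H^1(\Omega)$ with $-\nabla \cdot(MA^n \nabla v^n) \to g$ strongly in $H^{-1}(\Omega)$. Since $A^n \nabla v^n$ is bounded in $L^2(\Omega)^N$ and $M$ is bounded (as $MA^n \in \mathcal{M}(\alpha,\beta;\Omega)$), both $A^n \nabla v^n$ and $MA^n \nabla v^n$ admit weak $L^2$-cluster points $\tilde w$ and $w$, related by $w = M\tilde w$ a.e. The first Div-Curl argument from part (1), applied on $\Omega$ to $\nabla v^n$ and $(A^n)^t \nabla \chi_k^n$, yields $A^n \nabla v^n \cdot \nabla \chi_k^n \rightharpoonup (A^* \nabla v)_k$ in $\mathcal{D}'(\Omega)$. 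The remaining step is to show $\tilde w = A^* \nabla v$, for then $w = MA^* \nabla v$ as required.

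The main obstacle I foresee is in part (2): the hypothesis provides strong convergence of $-\nabla \cdot(MA^n \nabla v^n)$ in $H^{-1}(\Omega)$ but not of $-\nabla \cdot(A^n \nabla v^n)$ itself, so one cannot directly invoke the second Div-Curl argument to conclude $A^n \nabla v^n \cdot \nabla \chi_k^n \rightharpoonup \tilde w \cdot e_k$. When $M$ is a sufficiently regular scalar function (or a matrix with enough regularity to permit a Leibniz-type expansion), this is repaired by writing
$$ -\nabla \cdot(MA^n \nabla v^n) \;=\; -M \nabla \cdot(A^n \nabla v^n) - \nabla M \cdot A^n \nabla v^n, $$
and observing that the lower-order term, being bounded in $L^2(\Omega)$, is relatively compact in $H^{-1}(\Omega)$ by the compact embedding $L^2 \hookrightarrow H^{-1}$. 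Solving for $-\nabla \cdot(A^n \nabla v^n)$ then produces the missing $H^{-1}$-compactness, reduces the problem to the $H$-convergence of $A^n$ already in hand, and identifies $\tilde w = A^* \nabla v$, completing the argument.
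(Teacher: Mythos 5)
Your part (1) is correct and is the standard Murat--Tartar locality argument: adjoint oscillating test functions plus the div--curl pairing, exactly the proof one finds in \cite{A,T,BLP} (the paper itself only cites \cite{A}). Two small caveats: the div--curl lemma you invoke is the compensated-compactness version in which one factor merely has $H^{-1}$-compact divergence, which is slightly stronger than the $H(div)/H(curl)$ version stated in Section 2; and the existence of correctors $\chi_k^n$ adapted to $(A^n)^t$ presupposes the (standard but nontrivial) fact that $(A^n)^t \xrightarrow{H} (A^*)^t$. Both are available in the cited references, so (1) stands.

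Part (2) is where the genuine gap lies, and your own diagnosis of the obstacle is accurate: strong $H^{-1}$-convergence of $\nabla\cdot(MA^n\nabla v^n)$ gives no control on $\nabla\cdot(A^n\nabla v^n)$. But the proposed repair does not close the gap at the stated generality. First, the identity $\nabla\cdot(MB)=M\,\nabla\cdot B+\nabla M\cdot B$ is valid only for scalar $M$; for a matrix, $\nabla\cdot(MB)=\sum_{i,j}\partial_i(M_{ij}B_j)$ involves the full gradient of $B$, not just its divergence, so no amount of smoothness of a non-scalar $M$ rescues the argument. Second, and more seriously, the statement for genuinely matrix-valued $M$ is in fact false, so the gap cannot be closed: in $\R^2$ take $A^n=a(nx_1)I$ with $a$ one-periodic, nonconstant and bounded between positive constants, so that $A^*=\mathrm{diag}(\underline{a},\overline{a})$ with $\underline{a}<\overline{a}$ the harmonic and arithmetic means, and let $M=I+s\,e_1\otimes e_2$ be constant with $|s|$ small (its symmetric part is positive definite, so $MA^n\in\mathcal{M}_{\mathbb{R}}$). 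Solving the one-dimensional cell problem for the laminate $MA^n=a(nx_1)M$, exactly as in Section 4, shows that its $H$-limit has $(1,2)$-entry $s\,\underline{a}$, whereas $MA^*$ has $(1,2)$-entry $s\,\overline{a}$. Thus part (2) can only hold with $M$ scalar (or with additional structure), which is all the paper actually needs: on $B_{1/2}$, where $\varphi_1,\varphi_2$ are matrix-valued, $\gamma^n\equiv I$ does not oscillate, and elsewhere the multipliers are scalar.

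Finally, even in the scalar case your fix requires $\nabla M\in L^\infty$, while the multipliers of Section 4 ($\varphi_2$, $1+i\varphi_3/\omega$, and $\varphi_1$ outside $B_{1/2}$) are only piecewise constant. The argument that covers the intended application avoids differentiating $M$ altogether: for a constant scalar the claim is immediate from the definition (the admissible test sequences for $mA^n$ and $A^n$ coincide); then, by the compactness of $H$-convergence, extract a subsequence with $MA^n\xrightarrow{H} B^*$, identify $B^*=MA^*$ a.e.\ on each of the finitely many open subregions where $M$ is a constant scalar (or where $\gamma^n$ is constant) using your part (1), and conclude convergence of the whole sequence from the uniqueness and locality of the $H$-limit.
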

The proof can be found in \cite{A}.

\subsection{Homogenization of the Maxwell system}
Now, we will show how $H$-convergence can be used to find the limit of a sequence of Maxwell systems. Let $\{\eps^n\},\{\mu^n\}$ be sequences such that $\epsilon^n, \mu^n \in\mathcal{M}_{\mathbb{C}}(\alpha,\beta;\Omega)$ $\forall n\in\mathbb{N}$ for some $0<\alpha<\beta$. Then consider the following sequence of time-harmonic Maxwell equations at frequency $\omega >0$:
\begin{equation}\label{newmaxwell}\begin{cases}
\nabla \times E^n = i\omega \mu^n H^n\\
\nabla \times H^n = -i\omega \epsilon^n E^n \\
\nu \times E^n|_{\pa \Omega} = f \in  H^{-1/2}(div,\pa \Omega)
\end{cases}
\end{equation}

It can be shown (see Proposition \ref{peu} below) that the above problem admits a unique solution 
$(E^n,H^n)\in H(curl,\Omega)\times H(curl,\Omega)$ which satisfies
\begin{equation*}
\|E^n\|_{H(curl, \Omega)} +\|H^n\|_{H(curl,\Omega)} \leq C\|f\|_{H^{-1/2}(div,\pa \Omega)},
\end{equation*}
where the constant $C=C(\alpha,\beta,\omega)$ is independent of $n$. Therefore, $E^n,H^n$ have subsequences (still denoted by $E^n,H^n$) such that
\[
(E^n,H^n) \rightharpoonup (E,H) \mbox{ weakly in } H(curl,\Omega)\times H(curl,\Omega).
\]
Our goal is to get the limiting equation for $(E,H)\in H(curl,\Omega)\times H(curl,\Omega)$.\\

Let us begin with the an existence-uniqueness result for a Maxwell system of the type \eqref{newmaxwell}:
\begin{proposition}\label{peu}
Let $\epsilon, \mu \in\mathcal{M}_{\mathbb{C}}(\alpha,\beta;\Omega)$ for some $0<\alpha<\beta$. Then the following Maxwell system in $\Omega$ at frequency $\omega>0$  
\begin{equation}\label{sys}
\begin{cases}
\nabla \times E = i\omega \mu H\\
\nabla \times H = -i\omega \epsilon E \\
\nu \times E|_{\pa \Omega} = f\in  H^{-1/2}(div,\pa \Omega)
\end{cases}
\end{equation}
admits a unique solution $(E,H)\in H(curl,\Omega)\times H(curl,\Omega)$ satisfying 
\begin{equation}\label{esti}
\|E\|_{H(curl, \Omega)} +\|H\|_{H(curl,\Omega)} \leq C\|f\|_{H^{-1/2}(div,\pa \Omega)},
\end{equation}
where the constant $C=C(\alpha,\beta,\omega)$.
\end{proposition}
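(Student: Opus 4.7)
The plan is to reduce the first-order Maxwell system to a second-order variational problem for the electric field $E$ on $H_0(curl,\Omega)$, and then apply (a sesquilinear version of) the Lax-Milgram theorem. First I would use the right inverse $\eta_\tau$ of the tangential trace from Theorem \ref{t3} to produce a lifting $E_0 \in H(curl,\Omega)$ with $\nu \times E_0|_{\pa\Omega} = f$ and $\|E_0\|_{H(curl,\Omega)} \leq C\|f\|_{H^{-1/2}(div,\pa\Omega)}$, so that the new unknown $\tilde E := E - E_0$ lives in $H_0(curl,\Omega)$. A short argument (which also gives pointwise invertibility of $\mu(x)$) lets one eliminate $H = -\tfrac{i}{\omega}\mu^{-1}\curl E$ from the first Maxwell equation; substituting into the second yields $\curl(\mu^{-1}\curl E) = \omega^2\epsilon E$ in $\Omega$. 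Testing against $v \in H_0(curl,\Omega)$ and using the Stokes identity of Lemma \ref{t2} gives the weak formulation
\[
a(\tilde E, v) := \int_\Omega \mu^{-1}\curl \tilde E \cdot \overline{\curl v}\,dx - \omega^2 \int_\Omega \epsilon\, \tilde E \cdot \bar v\,dx = -a(E_0,v), \qquad \forall\, v \in H_0(curl,\Omega).
\]

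The next step is to check continuity and an appropriate coercivity of the sesquilinear form $a$. Continuity is immediate from the $L^\infty$ bounds $|\epsilon \xi|\leq \beta|\xi|$ and $|\mu^{-1}\xi|\leq C|\xi|$, the latter following from the lower bound on the Hermitian imaginary part of $\mu$ combined with $|\mu\xi|\leq \beta|\xi|$. For coercivity, the identity $\xi \cdot \mu^\dagger\bar\xi = \overline{\xi \cdot \mu\bar\xi}$ shows that the defining condition $-i\xi \cdot (\mu - \mu^\dagger)\bar\xi \geq \alpha|\xi|^2$ is equivalent to $\Im(\xi \cdot \mu\bar\xi) \geq \tfrac{\alpha}{2}|\xi|^2$, and likewise for $\epsilon$. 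Using the matrix identity $\mu^{-1} - \mu^{-\dagger} = \mu^{-1}(\mu^\dagger - \mu)\mu^{-\dagger}$ together with $|\mu\xi|\leq \beta|\xi|$, a short computation gives $-\Im(\xi \cdot \mu^{-1}\bar\xi) \geq c|\xi|^2$ for some $c = c(\alpha,\beta) > 0$, so that $\Im\mu^{-1}$ and $\Im\epsilon$ have opposite signs but both contribute with the \emph{same} sign to $-\Im a(u,u)$:
\[
-\Im\, a(u,u) \geq c\int_\Omega |\curl u|^2\,dx + \tfrac{\alpha\omega^2}{2}\int_\Omega |u|^2\,dx \geq C_1 \|u\|_{H(curl,\Omega)}^2,
\]
and in particular $|a(u,u)| \geq C_1 \|u\|_{H(curl,\Omega)}^2$ on $H_0(curl,\Omega)$.

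With continuity and this absorption-type coercivity in hand (applied, if preferred, to $i \cdot a$ so that the real part is coercive in the usual sense), the Lax-Milgram theorem produces a unique $\tilde E \in H_0(curl,\Omega)$ solving the variational problem, and hence a unique $E = E_0 + \tilde E \in H(curl,\Omega)$ with $\nu \times E|_{\pa\Omega} = f$. Defining $H := -\tfrac{i}{\omega}\mu^{-1}\curl E \in L^2(\Omega)^3$, the first Maxwell equation holds by construction; testing $a(E,v) = 0$ against $v \in C_c^\infty(\Omega)^3$ shows that $\curl H = -i\omega\epsilon E$ in $\mathcal{D}'(\Omega)$, so $H \in H(curl,\Omega)$ and the second Maxwell equation holds too. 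The estimate \eqref{esti} follows from the coercivity applied to $\tilde E$, the continuity bound on $v \mapsto a(E_0,v)$, the trace bound $\|E_0\|_{H(curl,\Omega)} \leq C\|f\|_{H^{-1/2}(div,\pa\Omega)}$, and the definition of $H$.

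I expect the main obstacle to be the coercivity estimate itself, since $a$ is neither Hermitian nor positive and the two terms $\int \mu^{-1}|\curl u|^2\,dx$ and $-\omega^2\int \epsilon|u|^2\,dx$ do not have a definite sign separately. The resolution is to work with imaginary parts rather than real parts: under matrix inversion the sign of the Hermitian imaginary part flips, so that $\Im\mu^{-1}$ and $\Im\epsilon$ cancel in $a(u,u)$ but cooperate in $-\Im a(u,u)$. Once this absorption estimate is secured, the remainder of the argument is standard variational theory.
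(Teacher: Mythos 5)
Your proof is correct, and its engine is the same as the paper's: coercivity of a curl--curl sesquilinear form coming from the absorption condition in $\mathcal{M}_{\mathbb{C}}(\alpha,\beta;\Omega)$, the key point in both arguments being that the Hermitian imaginary part flips sign under matrix inversion (your identity $\mu^{-1}-\mu^{-\dagger}=\mu^{-1}(\mu^\dagger-\mu)\mu^{-\dagger}$ is exactly the paper's substitution $\zeta=(\eps^t)^{-1}\xi$), followed by Lax--Milgram and recovery of the second field. The difference is purely in the formulation: you eliminate $H$ and solve for $E$, which makes the condition $\nu\times E|_{\pa\Omega}=f$ an \emph{essential} boundary condition, so you must invoke the surjectivity of $\gamma_\tau$ and its bounded right inverse $\eta_\tau$ from Theorem \ref{t3} to lift $f$ to $E_0$ and then work on $H_0(curl,\Omega)$; the paper instead eliminates $E$ and solves for $H$, so the same boundary condition becomes \emph{natural} and enters only through the continuous functional $w\mapsto\langle f,\gamma_T w\rangle$ on all of $H(curl,\Omega)$, with no lifting needed and the estimate \eqref{esti} read off directly from the duality bound \eqref{f}. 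Your route costs one extra ingredient (the trace lifting and the bookkeeping $E=E_0+\tilde E$, plus transferring the estimate back through $\|E_0\|\leq C\|f\|$) but is equally valid and arguably more familiar from the scalar Dirichlet setting; also note that your coercivity is stated for $-\Im a$ while the paper's is for $\Re a$, which is the same statement after the rotation by $i$ that you already mention, since the paper builds the factors $\frac{1}{i\omega}$ and $i\omega$ into its form.
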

\begin{remark}
Note that Theorem \ref{existunique} gives us existence and uniqueness when $\mu,\Re(\epsilon)\in \mathcal{M}_{\mathbb{R}}(\alpha,\beta;\Omega), \Im(\eps) \geq 0$ and for $\omega\in \mathbb{R}\setminus F$. Here we have instead assumed $\mu,\epsilon$ to be in $\mathcal{M}_{\mathbb{C}}(\alpha,\beta;\Omega)$ and $\omega>0$ is arbitrary.    
\end{remark}
\begin{proof}
 Let us consider the equation satisfied by $H$
 \begin{equation*}
  \nabla\times \lb\frac{1}{i\omega}\eps^{-1}(x)(\nabla\times H(x))\rb + i\omega \mu(x)H(x) = 0 \quad\mbox{in }\Omega.
 \end{equation*}
The weak formulation of the above problem would be
\begin{equation}\label{t1}
-\int_\Omega\lb (\nabla\times \overline{w})\cdot \frac{1}{i\omega}\eps^{-1}(\nabla\times H) + i\omega \overline{w}\cdot \mu H\rb dx  = \int_{\partial\Omega} (\nu \times E)\cdot \overline{w}\, dS,\quad \forall w\in H(curl,\Omega).
\end{equation}
Define the sesquilinear form $a: H(curl,\Omega)\times  H(curl,\Omega) \to \mathbb{C} $  by
\begin{equation*}
a(u,w) :=-\int_\Omega\lb (\nabla\times \overline{w})\cdot \frac{1}{i\omega}\eps^{-1}(\nabla\times u) + i\omega \overline{w}\cdot \mu u\rb dx, \quad \forall u,w\in H(curl,\Omega).
\end{equation*}
We note that, by writing $\overline{w} = \gamma_T\, \overline{w} + (\overline{w}\cdot\nu)\nu$ on $\partial\Omega$, the right hand side of \eqref{t1} becomes
\begin{align}\label{t4}
\int_{\partial\Omega} (\nu \times E)\cdot \overline{w}\, dS &= \int_{\partial\Omega} (\nu \times E)\cdot \gamma_T\, \overline{w}\, dS + \int_{\partial\Omega} (\nu \times E)\cdot  (\overline{w}\cdot\nu)\nu\, dS,\quad w\in H(curl,\Omega)\notag\\
&= \int_{\partial\Omega} (\nu \times E)\cdot \gamma_T\, \overline{w}\, dS \quad\mbox{(since $(\nu\times E)\cdot \nu =0 $ on $\pa\Omega$)}.
\end{align}
Then the weak formulation of the problem can be rewritten as
\begin{equation}\label{lm}
 a(H,w) = \langle f, \gamma_T\, w\rangle,\quad\forall w\in H(curl,\Omega)
\end{equation}
where $\langle , \rangle$ denotes the duality bracket between $H^{-1/2}(div,\partial\Omega)$ and $H^{-1/2}(curl,\partial\Omega)$ (cf. Lemma \ref{t2}) with respect to the $TL^2(\pa\Omega)$ inner product as in \eqref{t4}. \\
 We now claim that $a(\cdot,\cdot)$ is coercive, that is,
\begin{equation}\label{co}
 \Re a(u,u) = -\int_\Omega\lb \frac{1}{i\omega}(\nabla\times \overline{u})\cdot\lb(\eps^{-1})^\dagger-{\eps^{-1}}\rb(\nabla\times u) + i\omega u^{*}\cdot\lb\mu-\mu^{\dagger}\rb u \rb \geq C||u||^2_{H(curl,\Omega)}.
\end{equation}
Indeed, from the definition of $\mathcal{M}_\C(\alpha,\beta;\Omega)$ we have
\begin{equation*}
\begin{cases}
-i\omega\xi\cdot\lb\mu(x)-\mu(x)^{\dagger}\rb\overline{\xi}\geq C_1|\xi|^2\\
-\frac{i}{\omega}\xi\cdot\lb\eps(x)-{\eps}^{\dagger}(x)\rb\overline{\xi}\geq C_2|\xi|^2 
\end{cases}
\forall\xi \in \mathbb{C}^3,\ \mbox{ a.e. }x\in\Omega.
\end{equation*}
Now, for any $\zeta \in \C^3$,  
\[
 -\frac{i}{\omega}(\eps^{t}\zeta)\cdot \lb (\eps^{-1})^\dagger-\eps^{-1}\rb\overline{(\eps^{t}\zeta)} = -\frac{i}{\omega}\zeta\cdot \lb\eps -\eps^\dagger\rb\overline{\zeta}\geq C_2|\zeta|^2. 
\]
Setting $\zeta=(\eps^{t})^{-1}\xi$, we conclude that
\[
-\frac{i}{\omega}\xi \cdot ((\eps^{-1})^\dagger-\eps^{-1})\overline{\xi} \geq C_3|\xi|^2, \qquad \forall \xi \in \C^3.
\]
Therefore, by \eqref{co},
\begin{equation*}
\Re a(u,u) \geq C_3\|\nabla \times u\|_{L^2}^2 +C_1\|u\|_{L^2}^2 \geq \min\{C_1,C_3\}\|u\|_{H(curl,\Omega)}^2,
\end{equation*}
which proves the claim. Moreover, since $|\mu\xi|,|\epsilon\xi|\leq \beta|\xi|$ for all $\xi\in\ \C^3$ we can see that the sesquilinear form $a$ is continuous, i.e.,
\[
 |a(u,w)|\leq C||u||_{H(curl,\Omega)}||w||_{H(curl,\Omega)}.
\]
From the standard trace theory (cf. Lemma \ref{t2} and Theorem \ref{t3}) it is easy to see that the right-hand side of \eqref{lm} is continuous on $H(curl,\Omega)$, i.e.,
\begin{align}\label{f}
|\langle f, \gamma_T\, w\rangle|&\leq C ||f||_{H^{-1/2}(div,\partial\Omega)}||w||_{H^{-1/2}(curl,\partial\Omega)}\notag\\
&\leq C^\prime ||f||_{H^{-1/2}(div,\partial\Omega)}||w||_{H(curl,\Omega)},\quad w\in H(curl,\Omega).
\end{align}
Therefore, \eqref{lm} has a unique solution  $ H\in H(curl,\Omega)$ by the Lax-Milgram theorem, and from \eqref{co},\eqref{f} it enjoys the norm estimate
\[ ||H||_{H(curl,\Omega)}\leq C||f||_{H^{-1/2}(div,\partial\Omega)}.\]
Finally, we set $E = \frac{i}{\omega}\eps^{-1}\curl H$. It is easy to check that the weak curl of $E$ is $i\omega \mu H$, and therefore \eqref{sys} and \eqref{esti} easily follow.
\end{proof}

We now prove the homogenization result for the Maxwell system \eqref{newmaxwell}. Our proof closely follows the  method used in \cite{BLP}, though our assumptions on $\epsilon$ and $\mu$ are slightly different.

\begin{theorem}\label{Homogenization}
Let $\eps^n,\mu^n \in \mathcal{M}_\C(\alpha,\beta;\Omega)$ be such that
\begin{equation*}
\begin{cases}
\mu^n \xrightarrow{H} \mu^*,\\
\eps^n \xrightarrow{H} \eps^* 
\end{cases} \mbox{ in }\Omega, \mbox{ as }n \to \infty.
\end{equation*}
Suppose $(E^n,H^n) \in H(curl,\Omega)\times H(curl,\Omega)$ is the unique solution of the Maxwell system \eqref{newmaxwell}. Then, up to a subsequence,
\begin{equation*}
(E^n,H^n) \rightharpoonup (E,H) \mbox{ weakly in } H(curl,\Omega)\times H(curl,\Omega)\mbox{ as } n \to \infty.
\end{equation*}
where $(E,H) \in H(curl,\Omega)\times H(curl,\Omega)$ is the unique solution of the following homogenized time-harmonic Maxwell system:
\begin{equation}\label{hom}
\begin{cases}
\nabla \times E = i\omega \mu^* H\\
\nabla \times H = -i\omega \epsilon^* E \\
\nu \times E|_{\pa \Omega} = f
\end{cases}
\end{equation}
\end{theorem}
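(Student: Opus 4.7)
The plan is to get uniform $H(curl)$-bounds on $(E^n,H^n)$ from Proposition \ref{peu}, extract weak subsequential limits, pass to the weak limit in the curl equations, and then identify the resulting flux limits with $\mu^* H$ and $\eps^* E$ via the Div-Curl Lemma applied against oscillating test functions supplied by the $H$-convergence of the transposed matrices. Uniqueness of the homogenized problem then promotes subsequential convergence to convergence of the full sequence.

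By Proposition \ref{peu}, $\|E^n\|_{H(curl,\Omega)}+\|H^n\|_{H(curl,\Omega)}\le C\|f\|_{H^{-1/2}(div,\pa\Omega)}$ uniformly in $n$, so after passing to a subsequence $(E^n,H^n)\rightharpoonup(E,H)$ weakly in $H(curl,\Omega)\times H(curl,\Omega)$. Rewriting the equations as $\mu^n H^n=(i\omega)^{-1}\curl E^n$ and $\eps^n E^n=-(i\omega)^{-1}\curl H^n$ shows these sequences are bounded in $L^2(\Omega)^3$ with vanishing distributional divergence. Extracting further, $\mu^n H^n\rightharpoonup M$ and $\eps^n E^n\rightharpoonup\Xi$ weakly in $L^2$ with $M$ and $\Xi$ divergence-free; passing to the weak limit in the curl equations gives $\curl E=i\omega M$ and $\curl H=-i\omega\Xi$. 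Continuity of the tangential trace on $H(curl,\Omega)$ preserves the boundary condition $\nu\times E|_{\pa\Omega}=f$.

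The heart of the argument is identifying $M=\mu^* H$; the identification $\Xi=\eps^* E$ is identical. Fix $\xi\in\C^3$ and a compactly contained subdomain $\Omega'\Subset\Omega$. Using the standard fact that $H$-convergence is preserved under transposition, so $(\mu^n)^t\xrightarrow{H}(\mu^*)^t$, one constructs oscillating test fields $w^n_\xi\in H^1(\Omega')$ by solving $-\divg((\mu^n)^t\nabla w^n_\xi)=0$ with Dirichlet datum $x\cdot\xi$ on $\pa\Omega'$; these satisfy $\nabla w^n_\xi\rightharpoonup\nabla w^*_\xi$ and $(\mu^n)^t\nabla w^n_\xi\rightharpoonup(\mu^*)^t\nabla w^*_\xi$ weakly in $L^2(\Omega')$, together with $\curl\nabla w^n_\xi=0$ and $\divg((\mu^n)^t\nabla w^n_\xi)=0$, where $w^*_\xi$ solves the homogenized elliptic problem on $\Omega'$ with the same Dirichlet datum. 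Now compute $(\mu^n H^n)\cdot\nabla w^n_\xi=H^n\cdot(\mu^n)^t\nabla w^n_\xi$ in two ways via the Div-Curl Lemma: since $\mu^n H^n\rightharpoonup M$ in $H(div,\Omega')$ and $\nabla w^n_\xi\rightharpoonup\nabla w^*_\xi$ in $H(curl,\Omega')$, the product converges in $\mathcal{D}'(\Omega')$ to $M\cdot\nabla w^*_\xi$; since $H^n\rightharpoonup H$ in $H(curl,\Omega')$ and $(\mu^n)^t\nabla w^n_\xi\rightharpoonup(\mu^*)^t\nabla w^*_\xi$ in $H(div,\Omega')$, the same product converges to $H\cdot(\mu^*)^t\nabla w^*_\xi=(\mu^* H)\cdot\nabla w^*_\xi$. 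Hence $(M-\mu^* H)\cdot\nabla w^*_\xi=0$ on $\Omega'$, and by shrinking $\Omega'$ around a Lebesgue point of $\mu^*$ and varying $\xi$ over $e_1,e_2,e_3$, the three limits $\nabla w^*_{e_k}$ span $\C^3$ at a.e. point, forcing $M=\mu^* H$. Since $\eps^*,\mu^*\in\mathcal{M}_\C(\alpha',\beta';\Omega)$ for suitable $\alpha',\beta'$ (a standard stability property of $H$-convergence), Proposition \ref{peu} yields uniqueness for \eqref{hom}, so every weakly convergent subsequence shares the limit $(E,H)$ and the full sequence converges.

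The main obstacle I anticipate is justifying that the three vectors $\nabla w^*_{e_k}(x)$ span $\C^3$ at almost every $x$, which requires a careful elliptic-regularity and Lebesgue-point argument as the test subdomain $\Omega'$ shrinks around $x$. The construction of $w^n_\xi$ itself must also be adapted to the complex-valued class $\mathcal{M}_\C$ using the transpose rather than the Hermitian adjoint, and one must verify that the stability properties of $H$-convergence transfer cleanly to $(\mu^n)^t$ and $(\eps^n)^t$ in this complex setting.
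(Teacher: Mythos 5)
Your overall strategy is the paper's: uniform $H(curl,\Omega)$ bounds from Proposition \ref{peu}, extraction of weak $L^2$ limits of the fluxes $\mu^n H^n$ and $\eps^n E^n$, and identification of those limits with $\mu^* H$ and $\eps^* E$ by playing the two Div-Curl pairings $(\mu^n H^n)\cdot\nabla w^n$ and $H^n\cdot(\mu^n)^t\nabla w^n$ against each other with oscillating test functions supplied by $H$-convergence (your explicit use of the transpose is, if anything, more careful than the paper, which tacitly uses symmetry). The gap is in the last step of the identification. You construct $w^n_\xi$ by solving $-\divg((\mu^n)^t\nabla w^n_\xi)=0$ on a subdomain $\Omega'\Subset\Omega$ with affine Dirichlet datum $x\cdot\xi$; the Div-Curl argument then only yields $(M-\mu^*H)\cdot\nabla w^*_\xi=0$, where $w^*_\xi$ solves the homogenized local problem with the same datum, and $\nabla w^*_\xi$ is \emph{not} the constant vector $\xi$ -- it is merely the $L^2$ gradient of a solution of an equation with non-constant, complex, merely bounded measurable coefficients. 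Your conclusion needs $\nabla w^*_{e_1},\nabla w^*_{e_2},\nabla w^*_{e_3}$ to span $\C^3$ at almost every point, and you offer only the hope of a shrinking-domain/Lebesgue-point argument. That is precisely the nontrivial point, and it does not follow from anything you have set up: in three dimensions the gradients of the three solutions with boundary values $x_1,x_2,x_3$ are known to be able to degenerate (fail to form a basis) on sets of positive measure even for smooth real coefficients, and with $L^\infty$ complex coefficients and only $L^2$ gradients no Lebesgue-point or elliptic-regularity argument is available without substantial additional work. As written, the identifications $M=\mu^*H$ and $\Xi=\eps^*E$ are not proved.

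The paper sidesteps this entirely by choosing the test problems differently: it solves $-\divg(\mu^n\nabla w^n)=G$ (respectively $-\divg(\eps^n\nabla u^n)=F$) in all of $\Omega$ with $w^n\in H^1_0(\Omega)$ and lets $G$ (respectively $F$) range over $H^{-1}(\Omega)$. By the definition of $H$-convergence, $\mu^n\nabla w^n\rightharpoonup\mu^*\nabla w$ with $w$ the homogenized solution, and since $-\divg(\mu^*\nabla\cdot)$ is an isomorphism from $H^1_0(\Omega)$ onto $H^{-1}(\Omega)$, the limiting test functions $w$ sweep out \emph{all} of $H^1_0(\Omega)$. The same two Div-Curl pairings then give $(\mathfrak{h}_2-\mu^*H)\cdot\nabla w=0$ for every $w\in H^1_0(\Omega)$, and choosing $w=\psi x_k$ with cutoffs $\psi$ localizes this to $\mathfrak{h}_2=\mu^*H$ almost everywhere; no pointwise spanning property of special solution gradients is needed. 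If you replace your affine-data local problems by this family (or otherwise arrange that the limiting test gradients fill $\{\nabla u:\ u\in H^1_0(\Omega)\}$), your argument closes; the remaining ingredients of your proposal (a priori bounds, weak limits, preservation of the tangential boundary condition, uniqueness for the homogenized system via Proposition \ref{peu}) agree with the paper.
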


\begin{proof}
Consider the equation satisfied by $H^n$:
\begin{equation}\label{hn}
\nabla \times \frac{1}{i\omega}(\eps^n)^{-1}\nabla \times H^n +i\omega\mu^nH^n = 0. 
\end{equation}
By the estimate \eqref{esti}, we have $\|H^n\|_{H(curl,\Omega)} \leq C(\alpha,\beta,\Omega)$. Therefore, up to a subsequence, 
\begin{equation*}
H^n \rightharpoonup H \qquad \mbox{in } H(curl,\Omega).
\end{equation*}
Also, since $\|(\eps^n)^{-1}(\curl H^n)\|_{L^2} \leq C(\alpha,\beta,\Omega)$,
\begin{equation}\label{enhom}
(\eps^n)^{-1}\curl H^n \rightharpoonup \mathfrak{h}_1 \qquad \mbox{in } (L^2(\Omega))^3.
\end{equation}
Similarly, $\mu^nH^n$ is bounded in $L^2(\Omega)^3$. So assume
\begin{equation}\label{munhom}
\mu^n H^n \rightharpoonup \mathfrak{h}_2 \qquad \mbox{weakly in } L^2(\Omega)^3.
\end{equation}
Then, from the equation \eqref{hn}, we have
\begin{equation}\label{impid}
0 =\nabla \times \frac{1}{i\omega}(\eps^n)^{-1}\nabla \times H^n +i\omega\mu^nH^n \rightharpoonup \curl \frac{1}{i\omega}\mathfrak{h}_1 +i\omega \mathfrak{h}_2 \quad \mbox{ in } (H(curl,\Omega))'
\end{equation}
Now, let $u^n \in H^1(\Omega)$ solve
\begin{equation*}\begin{aligned}
-\nabla \cdot \left( \eps^n(x)\nabla u^n(x)\right) &= F \mbox{ in }\Omega\\
u^n &= 0 \mbox{ on }\partial\Omega
\end{aligned}
\end{equation*}
where $F\in H^{-1}(\Omega)$. Then we know that up to a subsequence, as $n \to \infty$, we have
\begin{align*}
u^n &\rightharpoonup u \quad\mbox{weakly in }H^1(\Omega)\\ 
\eps^n\nabla u^n &\rightharpoonup \eps^{*}\nabla u\quad\mbox{weakly in }L^2(\Omega)^3
\end{align*}
where $u\in H^{1}(\Omega)$ solves
\begin{equation}\label{32}
\begin{aligned}
-\nabla \cdot\left( \eps^{*}(x)\nabla u(x)\right) &= F \mbox{ in }\Omega\\
u &= 0 \mbox{ on }\partial\Omega.
\end{aligned}
\end{equation}
Let us consider the following identity:
\begin{equation}\label{33}
((\eps^n)^{-1}\curl H^n)\cdot \eps^n\nabla u^n = (\curl H^n)\cdot \nabla u^n \quad \mbox{in }\Omega.
\end{equation}
From \eqref{munhom} and \eqref{impid}, $(\eps^n)^{-1}\curl H^n \rightharpoonup \mathfrak{h}_1$ in $H(curl,\Omega)$. Also from \eqref{enhom} and \eqref{32}, $\eps^n \nabla u^n \rightharpoonup \eps^* \nabla u$ in $H(div,\Omega)$. Therefore, by the Div-Curl lemma, it follows that
\begin{equation*}
((\eps^n)^{-1}\curl H^n)\cdot \eps^n\nabla u^n \rightarrow \mathfrak{h}_1\cdot \eps^*\nabla u \quad \mbox{in } \mathcal{D}'(\Omega). 
\end{equation*}
Now, we find the limit of the right hand side of \eqref{33}. Since, $\nabla \cdot(\curl H^n)=0$ and $\curl(\nabla u^n) =0$,
we see that $\curl H^n \rightharpoonup \curl H$ in $H(div,\Omega)$ and $\nabla u^n \rightharpoonup \nabla u$ in $H(curl,\Omega)$. Therefore, again by the Div-Curl lemma, 
\begin{equation*}
\curl H^n \cdot \nabla u^n \to \curl H\cdot \nabla u \qquad \mbox{in }\mathcal{D}'(\Omega).
\end{equation*}
Now by equating the limits of the two sides of the equation, we get
\begin{equation*}
\mathfrak{h}_1\cdot \eps^* \nabla u = \curl H \cdot \nabla u \quad \mbox{in }\Omega.
\end{equation*}
In other words, $(\eps^*\mathfrak{h}_1-\curl H)\cdot \nabla u = 0$, where $u \in H^1_0(\Omega)$ is the solution of \eqref{32}. Since $(-\nabla \cdot(\eps^*\nabla))^{-1}: H^{-1}(\Omega) \to H^1_0(\Omega)$ is an isomorphism, by varying $F \in H^{-1}(\Omega)$, $u$ spans all of $H^1_0(\Omega)$. Therefore,
\begin{eqnarray*}
\eps^*\mathfrak{h}_1 &=& \curl H\\
\Rightarrow \mathfrak{h}_1 &=& (\eps^*)^{-1}\curl H.
\end{eqnarray*}
Next, we show that $\mathfrak{h}_2 = \mu^*H$. Let $w^n \in H^1(\Omega)$ be the unique solution of 
\begin{equation*}\begin{aligned}
-\nabla \cdot\left( \mu^{n}(x)\nabla w^n(x)\right) &= G \mbox{ in }\Omega\\
w^n &= 0 \mbox{ on }\partial\Omega
\end{aligned}
\end{equation*}
where $G \in H^{-1}(\Omega)$. Since $\mu^n \xrightarrow{H} \mu^*$,  up to a subsequence, as $n \to \infty$, we have
\begin{align*}
w^n &\rightharpoonup w \quad\mbox{weakly in }H^1(\Omega)\\ 
\mu^n\nabla w^n &\rightharpoonup \mu^{*}\nabla w\quad\mbox{weakly in }L^2(\Omega)^3
\end{align*}
where $w\in H^{1}(\Omega)$ solves
\begin{equation}\label{35}
\begin{aligned}
-\nabla \cdot\left( \mu^{*}(x)\nabla w(x)\right) &= G \mbox{ in }\Omega\\
w &= 0 \mbox{ on }\partial\Omega.
\end{aligned}
\end{equation}
Consider the identity
\begin{equation*}
 \mu^nH^n \cdot \nabla w^n = H^n \cdot \mu^n \nabla w^n. 
 \end{equation*}
We have $\nabla \cdot \mu^nH^n = 0$. So $\mu^nH^n \rightharpoonup \mathfrak{h}_2$ weakly in $H(div,\Omega)$. Similarly, since $\curl \nabla w^n = 0$, $\nabla w^n \rightharpoonup \nabla w$ weakly in $H(curl,\Omega)$. So, applying the Div-Curl lemma again, we get
\begin{equation*}
\mu^nH^n\cdot \nabla w^n \to \mathfrak{h}_2\cdot \nabla w \qquad \mbox{in } \mathcal{D}'(\Omega).
\end{equation*}
From the right hand side, we have $H^n \rightharpoonup H$ in $H(curl,\Omega)$ and that $\mu^n \nabla w^n \rightharpoonup \mu^* \nabla w$ in $H(div,\Omega)$ by \eqref{35}. So again by Div-Curl lemma, we have
\begin{equation*}
H^n \cdot \mu^n \nabla w^n \to H\cdot \mu^* \nabla w \qquad \mbox{in } \mathcal{D}'(\Omega).
\end{equation*}
Now, by equating the two limits, we get
\begin{eqnarray*}
\mathfrak{h}_2\cdot \nabla w &=& H\cdot \mu^* \nabla w\\
\Rightarrow (\mathfrak{h}_2-\mu^* H)\cdot \nabla w &=& 0.
\end{eqnarray*}
Again, by varying $G$, $w$ spans $H^1_0(\Omega)$. So we get $\mathfrak{h}_2 = \mu^*H$ in $\Omega$. Thus, we have the following homogenized equation for $H \in H(curl,\Omega)$:
\begin{equation*}
\curl \frac{1}{i\omega}(\eps^*)^{-1}\curl H +i\omega \mu^*H = 0 \qquad \mbox{in }H(curl,\Omega).
\end{equation*}
As before, set $E = \frac{i}{\omega}(\eps^*)^{-1}\curl H$. Then we see that $(E^n,H^n) \rightharpoonup (E,H)$ and $(E,H) \in H(curl,\Omega)\times H(curl,\Omega)$ satisfies \eqref{hom}.
\end{proof}

\subsection{Our goal : Main Theorem of the paper}
As we have seen, $(B_3\setminus \overline{B}_1;\eps^\rho_c,\mu^\rho_c)$ forms an approximate electromagnetic cloak for the region $B_1$. Therefore, for any $f \in H^{-1/2}(div,\pa B_3)$,
\[
 \Lambda_{\eps^\rho_e,\mu^\rho_e}f \rightarrow \Lambda_{I,I}f \quad \mbox{strongly in }H^{-1/2}(div,\pa B_3)\]
as $\rho \to 0$, where $(B_3;\eps^\rho_e,\mu^\rho_e)$ is the extended object  defined as before. We now want to construct \emph{isotropic} sequences $\{\eps_m\}$ and $\{\mu_m\}$ such that 
\[
\Lambda_{\eps_m,\mu_m}f \rightarrow \Lambda_{I,I}f \quad \mbox{strongly as } m \to \infty. \]
We would like to do this by first constructing sequences of permittivities and permeabilities whose $H$-limits are $\eps^\rho_e$ and $\mu^\rho_e$, and then passing to the limit as $\rho \to 0$. However, as we remarked in the introduction, we will not be able to directly find sequences whose homogenized limits are $(\eps^\rho_e,\mu^\rho_e)$ since $\mu^\rho_e \notin \mathcal{M}_{\C}(\alpha,\beta;B_3)$ for any $\alpha,\beta$. Therefore, we will go through a two-step process: first we fix a small parameter $\delta >0$, and  construct sequences $\eps^n_\delta, \mu^n$ such that
\begin{eqnarray*}
(1+i\delta)^{-1}\eps^n_\delta &\xrightarrow{H}& (1+i\delta)\eps^\rho_e,\\
(1+i\delta)\mu^n &\xrightarrow{H}& (1+i\delta)\mu^\rho_e.
\end{eqnarray*}
Note that both sides of the equations above are in $\mathcal{M}_{\C}(\alpha,\beta;B_3)$ for some $\alpha,\beta$, so that Theorem \ref{Homogenization} applies. In the next step, we let $\delta \to 0$. We will be able to show in Section 5 that
\[
\lim_{\delta \to 0} \left( \lim_{n\to \infty} \Lambda_{\eps^n_\delta,\mu^n}f\right) = \Lambda_{\eps^\rho_e,\mu^\rho_e}f \]
where the limit is in the strong topology of $H^{-1/2}(div, \pa B_3)$. We note however that the order in which we take the limits can not be interchanged. Finally, by a diagonal argument, by choosing sequences $n_m \to \infty, \delta_m \to 0$ and $\rho_m \to 0$, we can construct  $(\eps_m,\mu_m)$ such that
\[
\Lambda_{\eps_m,\mu_m}f \to \Lambda_{I,I}f \quad \mbox{strongly as }m \to \infty. \]

\section{Construction of the approximate cloak}

In this section we shall give an explicit construction of the approximate isotropic cloak. Throughout this section we assume that $\rho >0$ is a fixed parameter. Let us define 
\begin{equation*}
\gamma^* = \begin{cases}
(F_\rho)_*I & \mbox{in } B_3 \setminus B_{1/2},\\
1 & \mbox{in } B_{1/2},
\end{cases}
\end{equation*}
where $F_\rho$ is as defined in \eqref{frho}. More explictly,
\begin{equation}\label{gamma*}
\gamma^*(x) = \begin{cases}
I & \mbox{for } 2 \leq |x| \leq 3\\
\frac{1}{b}(I-\Pi(x)) +\frac{(|x|-a)^2}{b|x|^2}\Pi(x) & \mbox{for } 1< |x|< 2\\
\rho^{-1} I & \mbox{for } 1/2 < |x| < 1\\
I & \mbox{for } |x| <1/2
\end{cases} 
\end{equation}
where
\begin{equation*}
a = \frac{2(1-\rho)}{2-\rho}, \qquad b = \frac{1}{2-\rho}
\end{equation*}
and $\Pi(x) = |x|^{-2}xx^t$ is the projection in the radial direction. Given arbitrary regular $\eps_a, \mu_a$ in $B_{1/2}$, we note that the extended object is given by
\begin{equation*}
(\eps_e^\rho,\mu_e^\rho) = \begin{cases}
(\gamma^*,\gamma^*) & \mbox{in }B_3 \setminus B_1 \\
(\gamma^*(1+i\rho^{-2}/\omega),\gamma^*)& \mbox{in } B_1 \setminus B_{1/2} \\
(\eps_a \gamma^*, \mu_a \gamma^*)& \mbox{in } B_{1/2}.
\end{cases}
\end{equation*}
We will construct isotropic matrices of the form
\begin{equation}\label{ad7}
\gamma^n(x) = \gamma(x,n|x|)I ,\quad x\in B_3
\end{equation}
that $H$-converge to $\gamma^*$, where, $\gamma(x,y)$ is periodic in $y$ with period 1. Recall that $\gamma^*$ can be computed from $\gamma(x,y)$ using \eqref{localperiod}. From $\gamma^n$, it is straightforward to construct isotropic electric permittivities and magnetic permeabilities for the approximate cloak. Our construction is mostly based on \cite{GKLU20}. Let us change to polar coordinates. Let $s=(r,\theta,\varphi)$ and $t=(r^\prime,\theta^\prime,\varphi^\prime)$ be the spherical polar coordinates corresponding to the two scales $x$ and $y$ respectively. Next we homogenize $\gamma(s,t)$ in the $(r^\prime,\theta^\prime,\varphi^\prime)$-coordinates.
Let  $e_1,e_2,e_3$ denote the canonical basis vectors of $\mathbb{R}^3$  in $r^\prime$, $\theta^\prime$ and $\varphi^\prime$ directions respectively. Then for almost every $s\in\Omega $, there exist unique solutions $\chi_k(s,t),\, k=1,2,3$ of the equation
\begin{equation}\label{ad8}\begin{aligned}
&\nabla_{t}\cdot (\gamma(s,t)(\nabla_{t}\chi_k(s,t)+e_k))=0
\mbox{ in } \mathbb{R}^N,\\
 &t=(r^\prime,\theta^\prime\varphi^\prime) \mapsto \chi_k(s,t) \quad\mbox{is $1$-periodic in each of  $r^\prime,\theta^\prime,\varphi^\prime$. }
\end{aligned}\end{equation}
which satisfies the condition
\begin{equation}\label{ad9}
\int_ {Y} \chi_k(s,t)dt=0,
\end{equation}
where $dt =dr^\prime d\theta^\prime d\varphi^\prime$ and $Y=[0,1]^3$.

Since $\gamma(s,t)$ is independent of $\theta^\prime$ and $\varphi^\prime$, \eqref{ad8} and \eqref{ad9} imply that $\chi_k=0$ for $k=2,3$. Now consider the equation \eqref{ad8} for $\chi_1$:
\begin{equation}\label{ad11}
\frac{\p}{\p r^\prime} \left( \gamma(s, r^\prime)\frac{\p \chi_1(s,t)}{\p r^\prime}\right)=
-\frac{\p  \gamma(s, r^\prime)}{\p r^\prime}.
\end{equation}
It is clear from the above equation that $\chi_1$ is independent of $\theta^\prime,\varphi^\prime$ as well. Moreover, from \eqref{ad11} we get
\begin{equation*}
\frac{\p \chi_1}{\p r^\prime} =-1 + \frac{C}{\gamma(s,t)}
\end{equation*}
where the constant $C$ can be found by using the periodicity of $\chi_1$  with respect to $r^\prime$ to be
\[
C =\frac{1}{\int_0^1\gamma^{-1}(s,t)dr^\prime } :=\underline{\gamma}(s).
\]
Here $\underline{\gamma}(s)$ denotes the harmonic mean of $\gamma(s,t)$ in the second variable. Similarly, we let  $\overline{\gamma}(s)$ denote the arithmetic mean of $\gamma(s,t)$ in the second variable:
\[\overline{\gamma}(s) =\int_0^1 \gamma(s,t)dr^\prime.
\]
Now, for $\gamma^*$ to be the homogenized limit of $\gamma^n$, we must have
\[
 \gamma^*_{kl}(s) = \int_Y \gamma(s,r^\prime)\left(\frac{\partial\chi_k(s,t)}{\partial t_l} + \delta_{kl}\right)dt,
\]
which simplifies to
\begin{equation}\label{gamma}
\gamma^*(s)=\underline{\gamma}(s)\Pi(s)+\overline{\gamma}(s)(I-\Pi(s)),
\end{equation}
where, as before, $\Pi(s):\mathbb{R}^3\to \mathbb{R}^3$ is the projection on to the radial direction. Comparing this with \eqref{gamma*}, we see that it suffices to construct a $\gamma(s,t)$ such that
\begin{equation}\label{arith}
\overline{\gamma}(s) =
\begin{cases}
1 & \mbox{in } B_3 \setminus B_2\\
\frac{1}{b} & \mbox{in } B_2 \setminus B_1\\
\rho^{-1} & \mbox{in } B_1 \setminus B_{1/2}\\
1 & \mbox{in } B_{1/2} 
\end{cases}
\end{equation}
and
\begin{equation}\label{harm}
\underline{\gamma}(s) =
\begin{cases}
1 & \mbox{in } B_3 \setminus B_2\\
\frac{(|s|-a)^2}{b|s|^2}& \mbox{in } B_2 \setminus B_1\\
\rho^{-1} & \mbox{in } B_1 \setminus B_{1/2}\\
1& \mbox{in } B_{1/2}
\end{cases}
\end{equation}

Let us now construct such a $\gamma$. Our construction is based on the one presented in \cite{GKLU20}. However, we are able to work with a simpler construction since we do not need $\gamma$ to be continuous.  It is easy to define $\gamma(s,t)$ for $s \in B_1 \cup (B_3 \setminus B_2)$:
\begin{equation}\label{trivialgamma}
\gamma(s,t) =\begin{cases}
1 & \mbox{for } s \in B_3 \setminus B_2, t \in Y\\
\rho^{-1} & \mbox{for } s \in B_1\setminus B_{1/2}, \, t \in Y,\\
1 & \mbox{for} s \in B_{1/2}, \, t \in Y.
\end{cases}
\end{equation}
For $s \in B_2 \setminus B_1$, suppose we can write 
\begin{equation}\label{finalgamma}
\gamma(s,t) = \alpha(s)\chi_{(0,1/2)}(t) +\beta(s)\chi_{(1/2,1)}(t) 
\end{equation}
where $\alpha$ and $\beta$ are positive functions of $s$. Then \eqref{arith} and \eqref{harm} translate to
\begin{equation}\label{cond}
\begin{cases}
\overline{\gamma}(s) = \frac{\alpha(s)+\beta(s)}{2} = \frac{1}{b},\\
\underline{\gamma}(s) = \frac{2\alpha(s)\beta(s)}{\alpha(s)+\beta(s)} = \frac{(|s|-a)^2}{b|s|^2},
\end{cases}
\qquad \forall s \in B_2 \setminus B_1.
\end{equation}
Eliminating $\beta$ from the above equations, we get
\begin{equation*}
2\alpha^2-\frac{2}{b}\alpha +\frac{(|s|-a)^2}{b^2|s|^2} = 0. 
\end{equation*}
This equation will have two positive roots so long as the descriminant is non-negative:
\begin{eqnarray*}
\frac{4}{b^2}-\frac{8(|s|-a)^2}{b^2|s|^2} & \geq & 0\\
\Leftrightarrow 4|s|^2-16a|s|+8|a|^2 &\leq & 0
\end{eqnarray*}
which is equivalent to 
\[
(2-\sqrt{2})a \leq s \leq (2+\sqrt{2})a. \]
But this condition is true for any $0 < \rho <1/2$, as
\begin{eqnarray*}
(2-\sqrt{2})a &= & (2-\sqrt{2})\frac{2-2\rho}{2-\rho}< 2-\sqrt{2} < 1 \leq |s|, \quad \mbox{and }\\
(2+\sqrt{2})a &=& (2+\sqrt{2})\lb 1+\frac{\rho}{2-\rho}\rb > (2+\sqrt{2})\frac{2}{3} > 2 \geq |s|.
\end{eqnarray*}
Therefore, it follows that \eqref{cond} can be solved for $\alpha$ and $\beta$ to obtain positive functions of $s$ on $B_2 \setminus B_1$. Finally, equations \eqref{trivialgamma} and \eqref{finalgamma} define a $\gamma(s,t)$ with all the desired properties.\\

\subsection*{Construction of isotropic electromagnetic parameters :}

We shall now construct the electromagnetic parameters of our approximate isotropic cloak. We continue to assume  that the parameter $\rho>0$ introduced in the definition of the approximate cloak (cf. Definition \ref{appr})  is fixed. Recall that we defined \[\gamma^n(x) = \gamma(x,n|x|)\quad\mbox{ for all }x \in B_3, \quad n\in\mathbb{N}.\]
We define a sequence of isotropic non-singular  \textit{magnetic permeabilities} $\{\mu^n\}$ by
\begin{equation}\label{mn}
\mu^n := \varphi_1\gamma^n \quad \mbox{in }  B_3, 
\end{equation}
where 
\begin{equation*}
\varphi_1(x) = \begin{cases}
1& \mbox{in } B_3 \setminus B_{\frac{1}{2}}\\
\mu_a & \mbox{in } B_{\frac{1}{2}}
\end{cases}
\end{equation*}
and $\mu_a$ is an arbitrary permeability in $B_{\frac{1}{2}}$ as introduced in the Definition of cloaking (cf. Definition \ref{arb}). 

Next, we fix $\delta >0$ and define sequences of isotropic non-singular \textit{electric permittivities} $\{\widetilde{\epsilon}_{\delta}^{n}\}$ and isotropic  \textit{conductivities} $\{\sigma_{\delta}^{n}\}$ as follows:
\[
\widetilde{\epsilon}_{\delta}^{n} :=\mathcal{R}(\epsilon_{\delta}^{n})\quad\mbox{in }B_3 \quad\mbox{and}\quad \sigma^{n}_{\delta}:=\omega\Im(\epsilon_{\delta}^{n})\quad\mbox{in }B_3\]
where
\begin{equation}\label{end}
\epsilon_{\delta}^{n}=\widetilde{\epsilon}_{\delta}^{n} + \frac{i}{\omega}\sigma_{\delta}^{n} := (1+i\delta)^2\lb 1+\frac{i}{\omega}\varphi_3\rb \varphi_2\gamma^n \quad\mbox{in }B_3,
\end{equation}
and
\begin{equation*}
\varphi_2(x) = \begin{cases}
1& \mbox{in } B_3 \setminus B_{\frac{1}{2}}\\
\widetilde{\epsilon}_a+\frac{i}{\omega}\sigma_a & \mbox{in } B_{\frac{1}{2}}
\end{cases}
\end{equation*}
Here $\widetilde{\epsilon}_a,\sigma_a$ are arbitrary electric permittivity and conductivity respectively in $B_{\frac{1}{2}}$ as introduced in the definition of cloaking (cf. Definition \ref{arb}) and 
\[\varphi_3 =\rho^{-2}\chi_{B_1\setminus B_{\frac{1}{2}}}.\]

Now consider the following system of Maxwell equations:
\begin{equation*}
\begin{cases}
\nabla \times E^n_\delta = i\omega \mu^n H^n_\delta\quad\mbox{ in }B_3\\
\nabla \times H^n_\delta = -i\omega \epsilon^n_\delta E^n_\delta\quad\mbox{ in }B_3\\
\nu\times E^n_\delta =f \quad\mbox{ on }\partial B_3
\end{cases}
\end{equation*}
where $f \in H^{-1/2}(div,\pa B_3)$. $H^n_\delta$ satisfies the equation
\begin{equation}\label{n}
\frac{1}{i\omega}\nabla\times\left(  (\eps_{\delta}^n)^{-1}\nabla\times H^n_\delta\right) + i\omega \mu^n_\delta H^n_\delta = 0 \quad\mbox{in }B_3 .
\end{equation}
Multiplying throughout by $(1+i\delta)$, we get
\begin{equation*}
\frac{1}{i\omega}\nabla\times\left(  \left((1+i\delta)^{-1}\eps_{\delta}^n\right)^{-1}\nabla\times H^n_\delta\right) + i\omega (1+i\delta)\mu^n H^n_\delta = 0 \quad\mbox{in }B_3.
\end{equation*}
The above equation can be written in the variational form as
\begin{equation*}
a^n_\delta(H^n_\delta,w) = (1+i\delta)\int_{\pa B_3}f\cdot \overline{w} \qquad \forall w \in H(curl,B_3),
\end{equation*}
where $a^n_\delta$ is the sesquilinear form on $H(curl,B_3)$ defined by
\begin{equation*}
a^n_\delta(u,w) = -\int_{B_3} \left\{\frac{1}{i\omega}\lb(1+i\delta)^{-1}\eps^n_\delta)^{-1}\nabla \times u\rb\cdot (\nabla \times \overline{w}) +i\omega(1+i\delta)\mu^nu\cdot \overline{w}\right\}.
\end{equation*}
Let us define $\widehat{\mu}^n_\delta = (1+i\delta)\mu^n$. Then we see that 
\begin{eqnarray*}
\left(\widehat{\mu}^n_\delta-(\widehat{\mu}_{\delta}^n)^{\dagger}\right) &=& 2i\delta\varphi_1\gamma^n \\
\Rightarrow -\frac{i}{\omega}\xi\cdot \left(\widehat{\mu}_{\delta}^n-(\widehat{\mu}_{\delta}^n)^\dagger\right)\cdot \overline{\xi} &=&\frac{2\delta}{\omega}(\xi\varphi_1\gamma^n \overline{\xi}) > \delta c|\xi|^2. 
\end{eqnarray*}
for all $\xi \in \C^3$. Similarly, we define
\begin{equation*}
\widehat{\epsilon}^n_\delta =   (1+i\delta)^{-1}\eps^n_\delta
\end{equation*}
We find that 
\begin{eqnarray*}
\widehat{\eps}_{\delta}^n - (\widehat{\eps}_{\delta}^n)^{\dagger} &=& 2i \Im [(1+i\delta)^{-1}\eps^n_\delta]\\
&=& 2i\Im \left[ (1+i\delta)\left(1 +\frac{i}{\omega}\varphi_3\right)\varphi_2\gamma^n\right]\\
&=& 2i\left[\frac{\varphi_3}{\omega}+ \delta\right]\varphi_2\gamma^n\\
\Rightarrow -i\omega \xi \cdot (\widehat{\eps}_{\delta}^n - (\widehat{\eps}_{\delta}^n)^{\dagger})\cdot \overline{\xi} &=& 2\omega\left[\frac{\varphi_3}{\omega}\xi\varphi_2\cdot\gamma^n\overline{\xi} +\delta\xi\cdot \varphi_2\gamma^n \cdot \overline{\xi}\right]\\
&\geq & c\delta|\xi|^2
\end{eqnarray*}
where $c$ is some positive constant that depends on $\omega$. Note that $\widehat{\mu}_{\delta}^n$ and $\widehat{\eps}_{\delta}^n$ are uniformly bounded in $L^{\infty}(B_3)$ and we take $0 < \delta <1$. So all conditions for $H$-convergence are satisfied and we have
\begin{eqnarray*}
\widehat{\mu}_{\delta}^n & \xrightarrow{H} & (1+i\delta)\varphi_1 \gamma^* \quad \mbox{and} \\
\widehat{\eps}_{\delta}^n &\xrightarrow{H}& (1+i\delta)\lb 1+\frac{i}{\omega}\varphi_3\rb\varphi_2\gamma^* \quad \mbox{in } B_3. 
\end{eqnarray*}
by Proposition \ref{homprop}. We note that 
\begin{eqnarray*}
\mu^* &:=& \varphi_1 \gamma^* = \mu^\rho_e \quad \mbox{and}\\
\eps^* &:=& \lb 1+\frac{i}{\omega}\varphi_3\rb\varphi_2\gamma^* = \eps^\rho_e
\end{eqnarray*}
respectively, where $\eps^\rho_e,\mu^\rho_e$ are as in \eqref{rho}. Set
\begin{equation*}
\eps^*_\delta := (1+i\delta)^2\lb 1+\frac{i}{\omega}\varphi_3\rb\varphi_2\gamma^*.
\end{equation*}
We now apply Theorem \ref{Homogenization} to conclude that
\begin{eqnarray*}
H^n_\delta &\rightharpoonup & H_\delta \quad \mbox{and}\\
E^n_\delta &\rightharpoonup & E_\delta \quad\mbox{weakly in }H(curl,B_3)
\end{eqnarray*}
where $(H_\delta, E_{\delta}) \in H(curl,B_3)\times H(curl,B_3)$ is the solution of  the following homogenized equation corresponding to \eqref{n}:
\begin{equation*}
\begin{gathered}
\frac{1}{i\omega}\nabla\times\left(\left((1+i\delta)^{-1}(\eps^*_\delta)\right)^{-1}\nabla\times H_\delta\right) + i\omega(1+i\delta) \mu^* H_\delta = 0 \quad\mbox{in }B_3\\
E_\delta =\frac{-1}{i\omega}(\eps^*_\delta)^{-1}\nabla \times H_\delta\\
\nu\times E_\delta = f \quad\mbox{on }\partial B_3. 
\end{gathered}
\end{equation*}
Dividing the first equation by $1+i\delta$, we get
\begin{equation*}
\begin{gathered}
\frac{1}{i\omega}\nabla\times\left((\eps^*_\delta)^{-1}\nabla\times H_\delta\right) + i\omega \mu^* H_\delta = 0 \quad\mbox{in }B_3\\
E_\delta =\frac{-1}{i\omega}(\eps^*_\delta)^{-1}\nabla \times H_\delta\\
\nu\times E_\delta = f \quad\mbox{on }\partial B_3. 
\end{gathered}
\end{equation*}
Therefore, $(E_\delta,H_\delta)$ solve the following homogenized Maxwell system:
\begin{equation}\label{delta}
\begin{cases}
\curl E_{\delta} = i\omega \mu^*H_{\delta}\\
\curl H_{\delta} = -i\omega (\eps^*_\delta)E_{\delta}\\
\nu \times E_{\delta}|_{\partial B_3} = f \in H^{-1/2}(\mbox{div },\partial B_3)
\end{cases}
\end{equation}
Finally, we pass to the limit as $\delta \to 0$. Suppose $E,H$ satisfy
\begin{equation}\label{nodelta}
\begin{cases}
\curl E = i\omega \mu^* H \\
\curl H = -i\omega \eps^* E\\
\nu \times E|_{\partial B_3} = f.
\end{cases}
\end{equation}
Subtracting (\ref{nodelta}) from (\ref{delta}),
\begin{equation*}
\begin{cases}
\curl (E_\delta-E) =i\omega\mu^\rho_e(H_\delta-H)\\
\curl (H_\delta-H) = -i\omega\eps^\rho_e(E_\delta-E)+(2+i\delta)\omega \delta \eps^\rho_e E_\delta\\
\nu \times (E_\delta-E)|_{\pa B_3} = 0.
\end{cases}
\end{equation*}
Therefore, by Theorems \ref{existunique} and \ref{Nearcloak}, if $\omega$ is not an eigenvalue of $(B_3;I,I)$ and if $\rho>0$ is small enough, there exists a constant $C>0$ that is independent of $\delta$, but dependent on $\rho$, such that
\begin{equation*}
\|E_\delta-E\|_{H(curl,B_3)}+\|H_\delta-H\|_{H(curl,B_3)} \leq C\delta \|E_\delta\|_{L^2}.
\end{equation*}
Next, we write $E_\delta = (E_\delta-E)+E$ and apply the triangle inequality to get
\begin{eqnarray*}
\|E_\delta-E\|_{H(curl,B_3)}+\|H_\delta-H\|_{H(curl,B_3)} &\leq & C\delta(\|E_\delta-E\|_{L^2}+\|E\|_{L^2})\\
\Rightarrow (1-C\delta)\|E_\delta-E\|_{H(curl,B_3)}+\|H_\delta-H\|_{H(curl,B_3)} &\leq & C\delta\|E\|_{L^2}\\
\Rightarrow \|E_\delta-E\|_{H(curl,B_3)}+\|H_\delta-H\|_{H(curl,B_3)} &\leq & C'\delta\|E\|_{L^2}
\end{eqnarray*}
for small enough $\delta$. In conclusion, we have proved the following theorem:
\begin{theorem}\label{fieldconv} Suppose $\omega$ is not an electromagnetic eigenvalue for $(B_3;I,I)$ and suppose $\rho >0$ is small enough. Let $(E_\delta^nH_\delta^n), (E_\delta,H_\delta)$ be defined as above. Then for any fixed $\delta >0$,
\begin{equation*}
\lim_{n\to \infty}E^n_\delta = E_\delta, \quad \lim_{n \to \infty}H^n_\delta = H_\delta \quad \mbox{weakly in } H(curl,B_3)
\end{equation*}
and as $\delta \to 0$,
\begin{equation*}
\lim_{\delta\to 0}E_\delta = E, \quad \lim_{\delta \to 0}H_\delta = H \quad \mbox{strongly in }H(curl,B_3).
\end{equation*}
\end{theorem}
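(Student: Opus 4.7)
The plan is to prove the two convergence assertions separately, since they rely on quite different machinery: the first is the $H$-convergence result for Maxwell systems (Theorem~\ref{Homogenization}) applied to a rescaled version of the system for $(E^n_\delta,H^n_\delta)$, while the second is a classical stability estimate for the Maxwell boundary value problem applied to the difference of two systems whose coefficients differ by $O(\delta)$. Both pieces have in fact been assembled in the discussion preceding the theorem, and the proof is mainly a matter of organizing them.

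For the first limit (fixed $\delta$, $n\to\infty$), the first step is to rewrite the equation $\frac{1}{i\omega}\curl((\eps^n_\delta)^{-1}\curl H^n_\delta)+i\omega\mu^n H^n_\delta=0$ by multiplying through by $(1+i\delta)$, obtaining a Maxwell-type equation for $H^n_\delta$ with coefficients $\widehat{\eps}^n_\delta:=(1+i\delta)^{-1}\eps^n_\delta$ and $\widehat{\mu}^n_\delta:=(1+i\delta)\mu^n$. The second step is to verify that these lie in $\mathcal{M}_\C(\alpha,\beta;B_3)$ uniformly in $n$ and in $\delta\in(0,1)$; the lower bounds of order $\delta$ on the skew-Hermitian parts were recorded in the discussion above. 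The third step is to identify the $H$-limits: by Proposition~\ref{homprop}(ii) these factor through the $H$-limit of the scalar sequence $\gamma^n$, which was designed via the cell-problem formula \eqref{localperiod} together with \eqref{arith}--\eqref{harm} so that $\gamma^n\xrightarrow{H}\gamma^*$. Applying Theorem~\ref{Homogenization} then yields the weak convergence of $H^n_\delta$ to the solution $H_\delta$ of the homogenized problem, and the corresponding convergence of $E^n_\delta=(i/\omega)(\eps^n_\delta)^{-1}\curl H^n_\delta$ to $E_\delta$ follows.

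For the second limit ($\delta\to 0$), subtract system \eqref{nodelta} from \eqref{delta} to obtain a Maxwell system for $(E_\delta-E,H_\delta-H)$ with coefficients $(\eps^\rho_e,\mu^\rho_e)$, zero tangential boundary data, and an inhomogeneous term $(2+i\delta)\omega\delta\,\eps^\rho_e E_\delta$ of size $O(\delta)\|E_\delta\|_{L^2}$. Provided $\omega$ is not an EM eigenvalue of $(B_3;I,I)$, Theorem~\ref{Nearcloak} guarantees that $\omega$ is not an eigenvalue of $(B_3;\eps^\rho_e,\mu^\rho_e)$ for all sufficiently small $\rho$, so Theorem~\ref{existunique} gives a stability estimate with a constant $C=C(\omega,\rho)$ independent of $\delta$. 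A triangle-inequality argument $\|E_\delta\|_{L^2}\le\|E_\delta-E\|_{L^2}+\|E\|_{L^2}$ then absorbs the $C\delta\|E_\delta-E\|_{L^2}$ term into the left-hand side for $\delta<1/C$, yielding strong convergence in $H(curl,B_3)$.

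The main obstacle is ensuring that the stability constant for the difference system is uniform in $\delta$, which is exactly the point that forces the order of limits: passing $\delta\to 0$ first would leave $\mu^\rho_e$ real with eigenvalues vanishing on the cloaking interface, violating the hypothesis $\mu\in\mathcal{M}_\C(\alpha,\beta;\Omega)$ required by Theorem~\ref{Homogenization}, so the $n\to\infty$ step \emph{must} be carried out at positive $\delta$; conversely, the $\delta\to 0$ step succeeds only because the regularized-but-anisotropic limit $(\eps^\rho_e,\mu^\rho_e)$ is already covered by Theorem~\ref{existunique} and by the approximate-cloaking result of \cite{NearCloak}, which is what keeps $C(\omega,\rho)$ finite as $\delta\to 0$.
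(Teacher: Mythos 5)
Your proposal is correct and follows essentially the same route as the paper: for fixed $\delta$, rescale by $(1+i\delta)$ so that $\widehat{\eps}^n_\delta$ and $\widehat{\mu}^n_\delta$ lie in $\mathcal{M}_{\C}(\alpha,\beta;B_3)$, identify their $H$-limits through $\gamma^n \xrightarrow{H} \gamma^*$ and Proposition~\ref{homprop}, and invoke Theorem~\ref{Homogenization}; then for $\delta\to 0$, subtract \eqref{nodelta} from \eqref{delta} and use the stability estimate of Theorem~\ref{existunique} (valid since Theorem~\ref{Nearcloak} rules out $\omega$ being an eigenvalue of $(B_3;\eps^\rho_e,\mu^\rho_e)$ for small $\rho$) together with the absorption argument in $\delta$. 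One minor quibble in your closing remark: for fixed $\rho>0$ the permeability $\mu^\rho_e$ is uniformly positive definite (its smallest eigenvalue is of order $\rho^2$, not zero), so the obstruction to homogenizing directly at $\delta=0$ is not vanishing eigenvalues but simply that $\mu^\rho_e$ is real symmetric and therefore has no coercive skew-Hermitian part, which is what membership in $\mathcal{M}_{\C}(\alpha,\beta;B_3)$ and hence Theorem~\ref{Homogenization} require.
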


\section{Convergence of the Impedance map}

In this section we prove the main theorem of our paper, showing that our construction does in fact give an approximate isotropic cloak. Recall that we defined the impedance map $\Lambda_{\eps,\mu}:H^{-1/2}(div,\pa \Omega) \to H^{-1/2}(div, \pa \Omega)$ by setting $\Lambda_{\eps,\mu}f = \nu \times H|_{\pa \Omega}$, where $(E,H)$ is the unique solution of the boundary value problem
\begin{equation*}
\begin{cases}
\curl E = i\omega \mu H \quad \mbox{in } \Omega,\\
\curl H = -i\omega \eps E \quad \mbox{in } \Omega, \\
\nu \times E|_{\pa \Omega} = f.
\end{cases}
\end{equation*}
\begin{main}
Suppose $\omega > 0$ is not an eigenvalue for $(B_3;I,I)$, and let $\eps^n_\delta, \mu^n$ be defined as in the previous section. Then, given any $f \in H^{-1/2}(div,\pa B_3)$, we have
\begin{equation*}
\lim_{\delta \to 0}\lb \lim_{n \to \infty} \Lambda_{\eps^n_\delta, \mu^n}f \rb = \Lambda_{\eps^\rho_e,\mu^\rho_e}f \quad \mbox{strongly in } H^{-1/2}(div,\pa B_3),
\end{equation*}
and as a consequence
\[
\lim_{\rho \to 0}\lb \lim_{\delta \to 0}\lb \lim_{n \to \infty} \Lambda_{\eps^n_\delta, \mu^n}f \rb\rb = \Lambda_{I,I}f  \quad \mbox{strongly in } H^{-1/2}(div,\pa B_3).
\]
\end{main}
We emphasize once again that the order of the limits in the above equation can not be changed. Now, since $H_\delta \to H$ strongly in $H(curl,B_3)$, it follows immediately by the continuity of the trace map $H(curl,B_3) \to H^{-1/2}(div,\pa B_3)$ that
\begin{equation*}
\Lambda_{\eps^*_\delta,\mu^*}f = \nu \times H_\delta|_{\pa B_3} \to \nu \times H|_{\pa B_3} = \Lambda_{\eps^\rho_e,\mu^\rho_e} f
\end{equation*}
strongly in $H^{-1/2}(div,\pa B_3)$ as $\delta \to 0$. So it suffices to prove the following theorem:

\begin{theorem}\label{mainprop}
For a fixed $\delta >0$,
\begin{equation*}
 \Lambda_{\eps^n_\delta, \mu^n}f = \nu \times H^n_\delta|_{\pa B_3}  \to \nu \times H_\delta|_{\pa B_3} = \Lambda_{\eps^*_\delta,\mu^*}f
\end{equation*}
strongly in $H^{-1/2}(div,\pa B_3)$ as $n \to \infty$.
\end{theorem}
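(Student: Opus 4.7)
The plan is to exploit the fact that the construction of $\gamma^n$ is trivial on the outer annulus $A := B_3 \setminus \overline{B}_2$: indeed $\gamma^n \equiv 1$ there, and $\varphi_1 \equiv 1$, $\varphi_2 \equiv 1$, $\varphi_3 \equiv 0$ on $A$ by construction, so that
\[
\mu^n \equiv \mu^* \equiv I, \qquad \eps^n_\delta \equiv \eps^*_\delta \equiv (1+i\delta)^2 I \quad \mbox{on } A
\]
for every $n \in \N$. Consequently, if I set $U^n = E^n_\delta - E_\delta$ and $V^n = H^n_\delta - H_\delta$, the pair $(U^n,V^n)$ satisfies the \emph{homogeneous} constant-coefficient Maxwell system
\[
\curl U^n = i\omega V^n, \qquad \curl V^n = -i\omega (1+i\delta)^2 U^n \quad \mbox{on } A,
\]
together with $\nu \times U^n|_{\pa B_3} = 0$ (inherited from $\nu \times E^n_\delta = \nu \times E_\delta = f$) and $\divg U^n = \divg V^n = 0$ on $A$ (by taking divergences of the Maxwell equations and using that $\eps^*_\delta,\mu^*$ are constant scalars on $A$). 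By Theorem \ref{fieldconv}, $(U^n,V^n) \rightharpoonup (0,0)$ weakly in $H(curl,B_3)^2$, and the goal is to upgrade this to strong convergence of $\nu\times V^n$ in $H^{-1/2}(div,\pa B_3)$.

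Next I fix a cutoff $\chi \in C_c^\infty(\R^3)$ with $\chi \equiv 1$ on $B_3 \setminus \overline{B}_{11/4}$ and $\chi \equiv 0$ on $\overline{B}_{5/2}$, so that $\chi U^n$ is supported in $A$. Product-rule computations yield $\curl(\chi U^n) = \chi \curl U^n + \nabla \chi \times U^n$ and $\divg(\chi U^n) = \nabla\chi \cdot U^n$, both uniformly bounded in $L^2(B_3)$, and $\nu \times (\chi U^n)|_{\pa B_3} = 0$. Hence $\{\chi U^n\}$ is a bounded sequence in
\[
X_N(B_3) := \{v \in H(curl,B_3) \cap H(div,B_3):\ \nu \times v|_{\pa B_3} = 0\}.
\]
Since $\pa B_3$ is smooth and $B_3$ is simply connected, the classical compactness theorem for tangential boundary conditions (Weber--Friedrichs) gives the continuous embedding $X_N(B_3) \hookrightarrow H^1(B_3)^3$, which is compact into $L^2(B_3)^3$. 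Combined with $\chi U^n \rightharpoonup 0$ weakly in $L^2(B_3)$ (a consequence of the weak convergence of $U^n$ in $H(curl,B_3)$), this yields $\chi U^n \to 0$ strongly in $L^2(B_3)$, and hence $U^n \to 0$ strongly in $L^2(B_3 \setminus \overline{B}_{11/4})$.

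The final step bootstraps this $L^2$-convergence using the constant-coefficient elliptic system satisfied by $U^n$ on $A$: taking the curl of the first Maxwell equation and using $\divg U^n = 0$ yields $-\Delta U^n - \omega^2(1+i\delta)^2 U^n = 0$ on $A$, supplemented by $\nu \times U^n|_{\pa B_3} = 0$ and $\divg U^n = 0$ near $\pa B_3$. Standard boundary elliptic regularity for this (perfectly conducting) Maxwell boundary value problem, combined with a second cutoff that localizes away from $\pa B_2$, produces, for every $k \geq 2$, estimates of the form
\[
\|U^n\|_{H^k(B_3 \setminus \overline{B}_{23/8})} \leq C_k\, \|U^n\|_{L^2(B_3 \setminus \overline{B}_{11/4})},
\]
so $U^n \to 0$ strongly in $H^k$ near $\pa B_3$, and therefore $V^n = \tfrac{1}{i\omega}\curl U^n \to 0$ strongly in $H^{k-1}$ on the same neighborhood. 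Continuity of the trace map then gives $\nu \times V^n \to 0$ strongly in $H^{k-3/2}(\pa B_3)^3$, while its surface divergence
\[
Div_{\pa B_3}(\nu \times V^n) = -\nu \cdot \curl V^n|_{\pa B_3} = i\omega (1+i\delta)^2\, \nu \cdot U^n|_{\pa B_3}
\]
(read off from the second Maxwell equation on $A$) converges to $0$ in $H^{k-1/2}(\pa B_3)$. For $k\geq 2$ this convergence takes place in a topology strictly finer than that of $H^{-1/2}(div,\pa B_3)$, which completes the proof. The main technical obstacle will be the boundary-regularity bootstrap for the Maxwell-type system with the perfectly conducting boundary condition on $\pa B_3$; once classical elliptic theory for this smooth boundary value problem is invoked, the remainder is routine manipulation of cutoffs and trace maps.
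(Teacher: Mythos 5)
Your proposal is correct, but after the common first half it follows a genuinely different route from the paper. Both arguments start from the same observations: on the outer annulus $B_3\setminus \overline{B}_2$ the parameters of the $n$-th problem and of the homogenized problem coincide and are constant ($\mu^n=\mu^*=1$, $\eps^n_\delta=\eps^*_\delta=(1+i\delta)^2$), the differences $U^n=E^n_\delta-E_\delta$, $V^n=H^n_\delta-H_\delta$ are divergence-free there with $\nu\times U^n|_{\pa B_3}=0$, and a cutoff together with the compact embedding of $\{v\in H(curl)\cap H(div):\nu\times v|_{\pa B_3}=0\}$ into $L^2$ upgrades the weak convergence of Theorem \ref{fieldconv} to strong $L^2$ convergence of the electric field near $\pa B_3$ (this is the paper's Step 2; note that, as you observe, for this step only boundedness of $\nabla\chi\cdot U^n$ is needed, so you can dispense with the paper's Step 1, which uses the Div--Curl lemma). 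From there the paper stays at the energy level: it shows $\curl(\varphi V^n)\to 0$ in $L^2$ directly from the Maxwell equations (Step 3), then recovers $\varphi V^n\to 0$ in $L^2$ by testing the difference of the variational formulations with $w=\varphi^2 V^n$ (Step 4), and concludes by continuity of the tangential trace $H(curl,B_3)\to H^{-1/2}(div,\pa B_3)$. You instead bootstrap the constant-coefficient system: $-\Delta U^n=\omega^2(1+i\delta)^2U^n$ near $\pa B_3$ with the relative (perfect-conductor) boundary data $\nu\times U^n=0$, $\divg U^n=0$, and invoke local boundary elliptic regularity to get $U^n\to 0$ in $H^k$ near $\pa B_3$, whence $V^n=\tfrac{1}{i\omega}\curl U^n$ and its surface divergence $Div_{\pa B_3}(\nu\times V^n)=i\omega(1+i\delta)^2\,\nu\cdot U^n$ converge in norms strictly stronger than $H^{-1/2}(div,\pa B_3)$. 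Your identities and localization are all consistent with the paper's conventions, and the approach is sound; what it buys is a shorter endgame (no Div--Curl lemma, no variational trick for $H$) and convergence of the boundary data in stronger topologies, at the price of importing a heavier external ingredient, namely boundary regularity for the vector Helmholtz operator with the boundary pair $(\nu\times U,\ \divg U)$ (equivalently, the Hodge Laplacian with relative boundary conditions), which is classical but should be backed by a precise citation and a check of the complementing condition, whereas the paper's argument is self-contained given the Div--Curl lemma, Weber compactness and the trace theorem it already quotes.
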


\begin{proof}
Let $\varphi \in C^{\infty}(\overline{B}_3)$ be a non-negative smooth function such that supp $\varphi \subset \overline{B}_3\setminus \overline{B}_2$ and $\varphi \equiv 1$ in a neighbourhood of $\pa B_3$. We note that on the support of $\varphi$,
\begin{eqnarray}
\eps^n_\delta &=& \eps^*_\delta = (1+i\delta)^2, \quad \mbox{and}\label{eps} \\ 
\mu^n &=& \mu^* = 1.\label{mu}
\end{eqnarray}
It is clear by the continuity of the trace map that
\begin{equation}\label{keyineq}
\|\nu \times (H^n_\delta-H_\delta)|_{\pa B_3}\|_{H^{-1/2}(div,\pa B_3)} \leq C(\|\varphi(H^n_\delta-H_\delta)\|_{L^2} +\|\curl (\varphi(H^n_\delta-H_\delta))\|_{L^2}).
\end{equation}
We will show that both the terms in the right hand side of the above equation will go to $0$. The proof will follow a series of steps.

\begin{step} For any $\psi \in C_c^\infty(B_3 \setminus \overline{B}_2)$, we claim that
\[
\int \psi|E^n_\delta-E_\delta|^2 \to 0, \quad \int \psi|H^n_\delta-H_\delta|^2 \to 0 \qquad \mbox{as }n \to \infty. \]
\end{step}
Indeed, we already know $E^n_\delta -E_\delta \rightharpoonup 0$ in $H(curl,B_3 \setminus \overline{B}_2)$. Consequently, we also have $E^n_\delta -E_\delta \rightharpoonup 0$ in $L^2(B_3 \setminus \overline{B}_2)$. Moreover, $\nabla \cdot(E^n_\delta -E_\delta) = \frac{i}{\omega}(1+i\delta)^{-2}\nabla \cdot \curl(H^n_\delta-H_\delta) =  0$ in $B_3\setminus \overline{B}_2$. Therefore, $E^n_\delta -E_\delta \in H(div, B_3\setminus \overline{B}_2)$ and for any $v \in H(div, B_3\setminus \overline{B}_2)$,
\[
\int (E^n_\delta-E_\delta)\cdot \overline{v} +\nabla \cdot(E^n_\delta-E_\delta)\nabla \cdot v \to 0  \]
which implies that $E^n_\delta -E_\delta \rightharpoonup 0$ in $H(div,B_3\setminus \overline{B}_2)$ as well. Therefore, by the Div-Curl lemma, $|E^n_\delta-E_\delta|^2 = (E^n_\delta-E_\delta)\cdot (E^n_\delta-E_\delta) \to 0$ in $\mathcal{D}'(B_3\setminus \overline{B}_2)$. By similar arguments, we also have $|H^n_\delta-H_\delta|^2 \to 0$ in $\mathcal{D}'(B_3\setminus \overline{B}_2)$. This completes the proof of Step 1.

\begin{step} Next, we want to show that $\int \varphi^2|E^n_\delta-E_\delta|^2 \to 0$.
\end{step}
Consider the space of functions
\[
V = \{ u \in H(curl,B_3) : \nabla \cdot u \in L^2(B_3), \nu \times u|_{\pa B_3} = 0\} \]
with the following inner product:
\begin{eqnarray*}
\langle u,w\rangle_V &=& \langle u,w\rangle_{H(curl,B_3)} +\langle \nabla \cdot u,\nabla \cdot w\rangle_{L^2}\\
&=& \langle u,w\rangle_{L^2}+\langle \curl u,\curl w\rangle_{L^2}+\langle \nabla \cdot u,\nabla \cdot w\rangle_{L^2}. 
\end{eqnarray*}
It is well known $V$ is a Hilbert space and that the inclusion $V \hookrightarrow L^2(B_3)^3$ is compact \cite{Nedelec}. Now, we know that $\nu \times \varphi(E^n_\delta-E_\delta)|_{\pa B_3} = 0$ and $\varphi(E^n_\delta-E_\delta) \rightharpoonup 0$ in $H(curl,B_3)$. Next, consider
\[
\nabla \cdot (\varphi(E^n_\delta-E_\delta)) = \varphi \nabla \cdot (E^n_\delta-E_\delta) + \nabla \varphi \cdot (E^n_\delta-E_\delta). \]
The first term on the right hand side vanishes identically since $\nabla \cdot (E^n_\delta-E_\delta) = 0$ on supp $\varphi$. Also, since $\varphi \equiv 1$ in a neighbourhood of $\pa B_3$, $\nabla \varphi \in C_c^{\infty}(B_3 \setminus \overline{B}_2)$. Therefore, from Step 1, $\nabla \varphi \cdot (E^n_\delta-E_\delta) \to 0$ strongly in $L^2(\Omega)$. As a consequence, for any $w \in V$,
\[
\langle \varphi(E^n_\delta-E_\delta), w\rangle_V =  \langle \varphi(E^n_\delta-E_\delta),w\rangle_{H(curl,B_3)} + \langle  \nabla \cdot (\varphi(E^n_\delta-E_\delta)),\nabla \cdot w\rangle_{L^2} \to  0 \]
as $n \to \infty$. Therefore, $\varphi(E^n_\delta-E_\delta) \rightharpoonup 0$ in $V$ as well. By the compactness of the inclusion $V \hookrightarrow L^2(B_3)^3$, this further implies $\varphi(E^n_\delta-E_\delta) \to 0$ strongly in $L^2(B_3)^3$.

\begin{step} We now show that $\int |\curl (\varphi(H^n_\delta-H_\delta)|^2 \to $0.
\end{step}
Note that
\[
\curl(\varphi(H^n_\delta-H_\delta)) = \nabla\varphi \times (H^n_\delta-H_\delta) + \varphi \curl(H^n_\delta-H_\delta). \]
As we have already observed, $\nabla \varphi \in C_c^\infty(B_3)$ and therefore by Step 1, the first term on the right hand side converges strongly to $0$ in $L^2(B_3)$. On the other hand, $\curl(H^n_\delta-H_\delta) =-\frac{i}{\omega}(E^n_\delta-E_\delta)$ on supp $\varphi$. Therefore, by Step 2, the second term on the right hand side also converges to $0$ strongly in $L^2$. This completes Step 3.

\begin{step} The final step is to show that $\int \varphi^2|H^n_\delta-H_\delta|^2 \to 0$.
\end{step}
Consider the variational equations satisfied by $H^n_\delta$ and $H_\delta$: for all $w \in H(curl,\Omega)$,
\begin{eqnarray*}
-\int \frac{1}{i\omega}\{(\eps^n_\delta)^{-1}\curl H^n_\delta\}\cdot \curl \overline{w} +i\omega\mu^nH^n_\delta \cdot \overline{w}&=& \int_{\pa \Omega}f\cdot \overline{w}, \quad \mbox{and}\\
-\int \frac{1}{i\omega}\{(\eps^*_\delta)^{-1}\curl H_\delta\}\cdot \curl \overline{w} +i\omega\mu^*H_\delta\cdot \overline{w}&=& \int_{\pa \Omega}f\cdot \overline{w}.
\end{eqnarray*}
Taking the difference of these two equations and rearranging terms, we get
\begin{equation*}
\int (\mu^nH^n_\delta -\mu^*H_\delta)\cdot \overline{w} = \frac{1}{\omega^2}\int \{(\eps^n_\delta)^{-1}\curl H^n_\delta - (\eps^*_\delta)^{-1}\curl H_\delta\}\cdot \curl \overline{w}.
\end{equation*}
Now let $w = \varphi^2(H^n_\delta-H_\delta)$. By equations (\ref{eps}) and (\ref{mu}),
\begin{eqnarray*}
\int \varphi^2|H^n_\delta-H_\delta|^2 &=& \omega^{-2}(1+i\delta)^{-2}\int \curl(H^n_\delta-H_\delta)\cdot \curl(\varphi(H^n_\delta-H_\delta)).\\
\Rightarrow \|\varphi(H^n_\delta-H_\delta)\|_{L^2}^2 &\leq & C\|H^n_\delta-H_\delta\|_{H(curl,\Omega)}\|\curl(\varphi(H^n_\delta-H_\delta))\|_{L^2}.
\end{eqnarray*}
It is clear from Step 3 that the right hand side of the above equation goes to $0$ as $n \to \infty$. Therefore, we conclude that $\varphi(H^n_\delta-H_\delta) \to 0$ strongly in $L^2(B_3)^3$. Finally, applying the conclusions of Steps 3 and 4 to (\ref{keyineq}) gives us the desired result.

\end{proof}

\section*{Acknowledgments}

The authors would like to thank Gunther Uhlmann for suggesting the problem. Ashwin Tarikere was partially supported by the National Science Foundation under grant DMS-1265958 and a Visiting Scholarship at the Institute for Avanced Study, Hong Kong University of Science and Technology.

\bibliography{Master_bibfile}
\bibliographystyle{plain}
\end{document}